\def\e{\varepsilon}
\newtheorem {theorem} {Theorem}
\newtheorem {proposition} [theorem]{Proposition}
\newtheorem {corollary} [theorem]{Corollary}
\newtheorem {example} [theorem]{Example}
\newtheorem {remark} [theorem]{Remark}
\newtheorem{mtheorem}{Theorem}
\tikzset{node distance=3cm, auto}
\begin{document}

\title[Holomorphic slow-Fast systems]
{ Holomorphic slow-Fast systems}

\author[Gabriel Rondón, Paulo R. da Silva and Luiz F. S. Gouveia]
{Gabriel Rondón, Paulo R. da Silva and Luiz F. S. Gouveia}

\address{S\~{a}o Paulo State University (Unesp), Institute of Biosciences, Humanities and
	Exact Sciences. Rua C. Colombo, 2265, CEP 15054--000. S. J. Rio Preto, S\~ao Paulo,
	Brazil.}

\email{gabriel.rondon@unesp.br}
\email{paulo.r.silva@unesp.br}
\email{fernando.gouveia@unesp.br}

\thanks{ .}

\subjclass[2020]{34C45, 34A09.}

\keywords { Holomorphic systems, geometric singular perturbation theory, Fenichel theory, invariant complex manifold.}
\date{}
\maketitle

\begin{abstract}
In this paper, we are concerned with studying the existence of invariant complex manifolds of two-dimensional holomorphic systems. From the geometric singular perturbation theory we know that if a slow-fast system has associated a normally hyperbolic compact critical manifold, then there exists a smooth locally invariant manifold. However, this smooth manifold does not necessarily have a complex structure. Here, we provide conditions to guarantee the existence of one-dimensional invariant complex manifolds. Consequently, this allows us to establish that the centers, foci, and nodes of the reduced problem are persistent by singular perturbation. The tools used by us are the usual techniques of Fenichel and Briot-Bouquet Theories. 

\end{abstract}


\section{Introduction}
In the qualitative theory of dynamical systems, there are several studies on singular perturbation problems (see, for instance, \cite{Fenichel79,Jones95,reg_space,TexeiraSilva12}). In this paper, we want to study singular perturbation problems in $\mathbb{R}^4,$ which can be written as two-dimensional holomorphic systems. It is important to highlight that although there are applications that can be modeled by two-dimensional holomorphic systems, see \cite{Moroz_b,Moroz_a}, there are few articles on the subject such as \cite{Needham1,Needham2}.



The basic systems we consider are of the form 
\begin{equation}\tag{SF1}\label{main_eq}
\left\{\begin{array}{rcl}
\varepsilon\dot{z}&=&f(z,w),\\\
\dot{w}&=&g(z,w),
\end{array}\right.
\end{equation}
where $f,g:D\to\mathbb{C}$ are holomorphic functions,  $D\subseteq\mathbb{C}^2$ is a simply connected domain, $\varepsilon>0$ is a small enough parameter and the dot $\cdot$ represents the derivative of the functions $z(\tau)$ and $w(\tau)$ with respect to the real variable $\tau$.

Emphasize that $C_0=\{(z,w)\in D:f(z,w)=0\}$ is the one-dimensional critical manifold associated with the system \eqref{main_eq} provided that $\frac{\partial f}{\partial z}(z,w)\neq 0$ for all $(z,w)\in C_0$. Recall that \eqref{main_eq} can be written as a real four-dimensional autonomous system in an appropriate domain of $\mathbb{R}^4$.

Suppose that $S_0\subset C_0$ is a normally hyperbolic compact critical manifold. Thus, we can assume that $S_0$ is given as the graph of a function of $z$ in terms of $w$. Indeed, since $\frac{\partial f}{\partial z}(z,w)\neq 0$ for all $(z,w)\in S_0,$ then from \textit{Implicit Function Theorem} (see \cite{volker}), there exist an open set $U$ and a holomorphic function $z=h(w)$ such that 
$f(h(w),w)=0,$ for all $w\in U.$ Hence, without loss of generality we can assume that $S_0=\{(z,w):z=h(w),w\in U\}.$ 

One of the main goals of this paper is to study the existence of invariant complex manifolds of two-dimensional holomorphic system \eqref{main_eq}. At some point, the reader may think that it is enough to apply \textit{Fenichel's theorem} (see, for instance, \cite{Jones95}) to obtain a one-dimensional invariant complex manifold. In general, this is not true, due to the fact that there are smooth manifolds that have no complex structure. An obvious impediment is that the dimension of the smooth manifold must be even. Another obstacle is orientability, since every complex manifold is orientable.

Although the \textit{geometric singular perturbation theory} does not allow us to guarantee the existence of invariant complex manifolds, it is possible to use this theory and the Laurent series to construct examples of smooth manifolds without complex structure. Indeed, if we apply the \textit{Fenichel's Theorem} to the equivalent $C^\infty$ four-dimensional real system associated with system \eqref{main_eq}, then there exists a smooth locally invariant manifold $S_\varepsilon=\{(z,w):z=h_\varepsilon(w)\}$ of slow-fast system \eqref{main_eq}, which is diffeomorphic to $S_0.$ In general, this function $h_\varepsilon(z)$ does not have to be holomorphic, as we illustrate in the following example. 
Consider the complex dynamical system
\begin{equation}\label{main_eq_ex}
\begin{aligned}
\left\{\begin{array}{l}
\varepsilon\dot{z}=z+w,\\\
\dot{w}=w^2,
\end{array} \right.
\end{aligned}
\end{equation} 
notice that $C_0=\{(z,w):z=-w\}$ is a normally hyperbolic critical manifold of \eqref{main_eq_ex}. In addition, if  $S_0\subset C_0$ is a compact set, then the equivalent system to \eqref{main_eq_ex} in $\mathbb{R}^4$ has a smooth locally invariant manifold $S_\varepsilon$. Nevertheless, this manifold has no complex structure. Indeed, suppose that there exists a holomorphic function $h_\varepsilon$ such that $S_\varepsilon=\{(z,w):z=h_\varepsilon(w)\}$. Then $h_\varepsilon$ can be written as a power series. In Section \ref{sec:Fenichelaprox}, we will prove that this series is given by
$$h_\varepsilon(w)=-\sum_{k=0}^{\infty} \varepsilon^k k!w^{k+1}.$$ Notice that this series diverges for $w\neq 0$, which contradicts the fact that $h_\varepsilon$ is holomorphic.

At this point, it is natural to ask: under what conditions can we ensure the existence of invariant manifolds with complex structure of system \eqref{main_eq}?
In Section \ref{sec:BB}, we use the \textit{Briot-Bouquet Theory} and provide conditions to guarantee the existence of one-dimensional invariant complex manifolds. Specifically, in Theorem \ref{main_teo} we prove that, for $\varepsilon>0$ sufficiently enough, the  system 
\begin{equation}\tag{SF2}\label{per_sys_eq_nf}
\begin{aligned}
\left\{\begin{array}{l}
\varepsilon\dot{z}=\alpha z+\tilde{f}(z,w),\\[5pt]
\dot{w}=\beta w+\tilde{g}(z,w),
\end{array} \right.
\end{aligned} \,\text{where}\, \tilde{f},\tilde{g}=\mathcal{O}_2(z,w) \,\text{and}\, \alpha,\beta\in\mathbb{C},
\end{equation}
 has a unique one-dimensional invariant complex manifold $C_\varepsilon$ passing through the equilibrium point $q_\varepsilon=(0,0)$ provided that the critical manifold is normally hyperbolic. 

We employ Theorem \ref{main_teo} to prove that the centers, foci and nodes of the reduced problem associated with the holomorphic slow-fast system \eqref{main_eq}, whose Jacobian matrix of $f$ and $g$ is diagonalizable at the origin, are preserved by singular perturbation, for more details see Theorem \ref{prop_center_2} and Figure \ref{fig-foci}.
\begin{figure}[h!]
\begin{overpic}[scale=0.26]{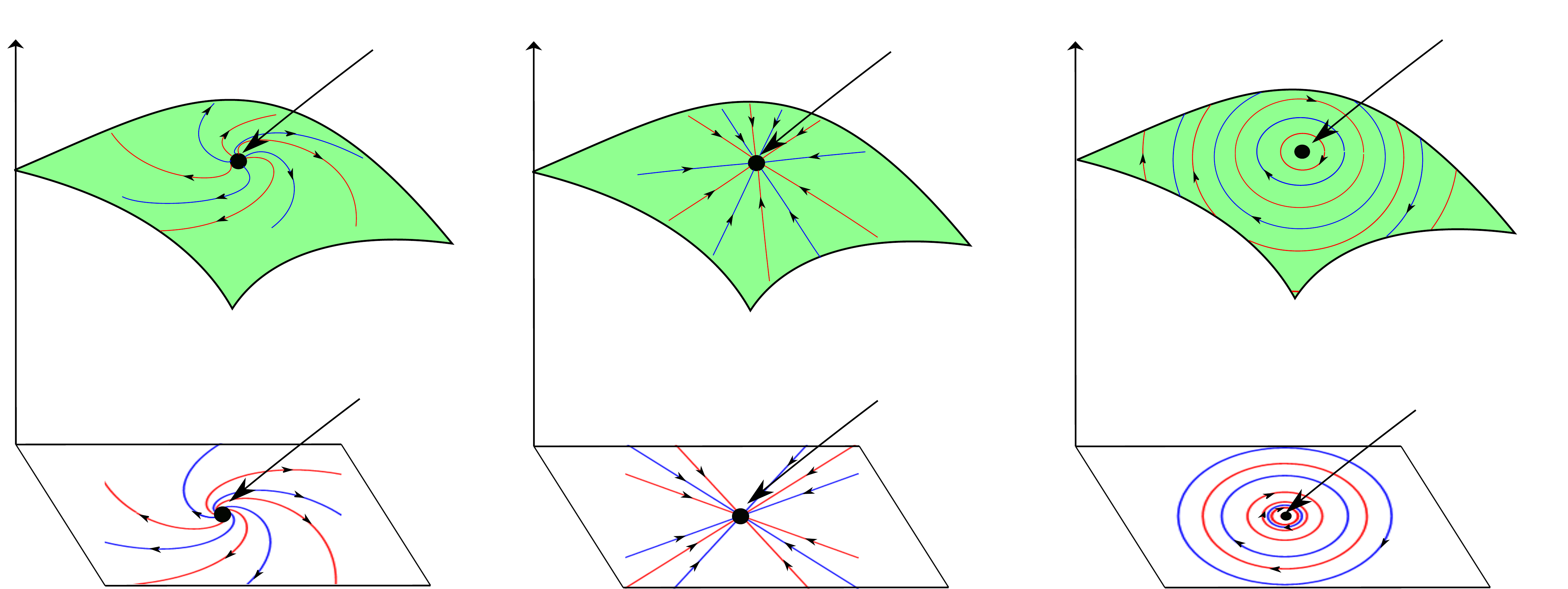}
		\put(96,1){$C_0$}
        \put(61,1){$C_0$}
        \put(28,1){$C_0$}
        \put(97,24){$C_\varepsilon$}
        \put(61,24){$C_\varepsilon$}
        \put(28,24){$C_\varepsilon$}
		\put(24,13){$q_0$}
        \put(57,13){$q_0$}
        \put(91,12){$q_0$}
        \put(25,35){$q_\varepsilon$}
        \put(58,35){$q_\varepsilon$}
        \put(93,35){$q_\varepsilon$}
        \put(0.5,36){$\varepsilon$}
        \put(33.5,36){$\varepsilon$}
        \put(68,36){$\varepsilon$}
		\end{overpic}
  \caption{\footnotesize{Persistence of a focus, node and center of the reduced problem by singular perturbation, respectively.}}
  \label{fig-foci}
\end{figure}

Also, we are concerned with determining the existence of complex invariant manifolds for certain families of two-dimensional holomorphic systems, whose jacobian matrix of $f$ and $g$ is not necessarily digonalizable at the origin and this point does not need to be an equilibrium point of the system, see Theorems \ref{uncouple1} and \ref{couple}. Specifically, Theorem \ref{uncouple1} stablishes the existence of one-dimensional invariant complex manifolds associated with slow-fast system \eqref{main_eq} whose equation of orbits is separable, that is, in the case that there exist holomorphic functions $\eta,\kappa$ such that $\frac{g(z,w)}{f(z,w)}=\eta(z)\kappa(w)$ in a suitable domain. 
Furthermore, in Theorem \ref{couple} we state conditions to ensure when the system 
\begin{equation}\tag{SF3}\label{couple_eq_1}
\begin{aligned}
\left\{\begin{array}{l}
\varepsilon\dot{z}=\alpha z+\beta G(w),\\[5pt]
\dot{w}=g(w),
\end{array} \right.
\end{aligned} \text{where} \ G'\equiv 1/g \, \text{and}\ \alpha,\beta\in\mathbb{C}\setminus\{0\},
\end{equation}
has associated a one-dimensional exponentially attractive invariant complex manifold $C_\varepsilon$.

Finally, we will present a method to study slow-fast systems of the form
\begin{equation}\tag{SF4}\label{main_eq_aprox}
\begin{aligned}
\left\{\begin{array}{l}
\varepsilon\dot{z}=\alpha z+\beta w,\\[5pt]
\dot{w}=g(w),
\end{array} \right.
\end{aligned}, \,\text{where}\, \alpha\in\mathbb{C}\setminus\{0\}\, \text{and}\, \beta\in\mathbb{C},
\end{equation}
where $g$ is one of the normal forms given in \cite{GGJ}, namely: $\dot{w}=1,$ $\dot{w}=(a+ib)w$, $\dot{w}=w^n$, $\dot{w}=\frac{1}{w^n}$ and $\dot{w}=\frac{\gamma w^n}{1+w^{n-1}}$. We will determine which of these families could have associated manifolds with complex structure and in that case we will investigate what happens when a singularity of the reduced problem is perturbed for $\varepsilon>0.$ Specifically, in Propositions \ref{prop:1} and \ref{prop:2} we prove that the centers and poles of order $n$ of the reduced problem associated with system \eqref{main_eq_aprox} are persistent by singular perturbation.


The paper is organized in the following form. In Section \ref{sec:Preliminaries}, we present some basic results on the complex manifolds, geometric singular perturbation theory and the Briot-Bouquet Theory. In Section \ref{sec:BB}, we employ Briot-Bouquet theory to prove Theorem \ref{main_teo} and \ref{prop_center_2}. In Section \ref{sec:mainresults}, we state and prove Theorems \ref{uncouple1} and \ref{couple}. Lastly, in Section \ref{sec:Fenichelaprox} we use the Laurent series to give complex systems that have no associated complex manifolds. 

\section{Preliminaries}\label{sec:Preliminaries}
This section is devoted to establishing some basic results that will be used throughout the paper.
\subsection{Complex manifolds}

 A  \textit{holomorphic function}  $f$
is a complex-valued function  defined in a domain $\mathcal{V}\subseteq\mathbb{C}$ and satisfying that
\begin{itemize}
	\item $u=\operatorname{Re}(f)$ and $v=\operatorname{Im}(f)$  are continuous,
	\item  there exist the partial derivatives $u_x,u_y,v_x,v_y$ in $\mathcal{V}$ and, 
	\item  the partial derivatives satisfy the Cauchy--Riemann equations, see \cite{Morris}, \[u_x=v_y,\quad u_y=-v_x,\quad \forall z=x+iy\in\mathcal{V}.\]  
\end{itemize}
We recall that \textit{Looman--Menchoff's Theorem} establishes that the above conditions are sufficient to guarantee the analyticity of  $f$. This result implies that for any $z_0\in\mathcal{V}$ 
\begin{equation*}
f(z)=A_0+A_1(z-z_0)+A_2(z-z_0)^2+...,\quad A_k=a_k+ib_k=\dfrac{f^{(k)}(z_0)}{k!}
\label{analF}
\end{equation*}
for $z\in D(z_0,R_{z_0})\subseteq\mathcal{V}$ where $D(z_0,R_{z_0})$ is the largest possible $z_0$--centered disk contained in $\mathcal{V}.$ 

If $f$ is holomorphic in a punctured disc $D(0,R)\setminus\{0\}$ and it is not derivable at $0$ we say that $0$ is a singularity of $f$. In this case $f$ is equal to its Laurent's series in $D(0,R)\setminus\{0\}$

\begin{equation*} 
	f(z)=\sum_{k=1}^{\infty}\dfrac{B_k}{z^k}+ \sum_{k=0}^{\infty}A_kz^k, \label{laurent}
\end{equation*}
where $B_k=\frac{1}{2\pi i}\int_{C_{\e}}f(z)z^{k-1}dz,\quad A_k=\frac{1}{2\pi i}\int_{C_{\e}}\frac{f(z)}{z^{k+1}}dz$
with $C_{\e}$ parameterized by $z(t)=\e e^{it}, \e\sim0$, $t\in[0,2\pi]$.  

If $B_k\neq0$  for an infinite set of indices $k$ we say that $0$ is an \textit{essential singularity} and if
there exists $n \geq1$ such that $B_n \neq0$ and $B_k = 0$ for every $k>n$ then we say that $0$ is a \textit{pole of order n}. 

The notion of holomorphic function can be extended as follow.
\\

    Consider $\mathcal{W}\subset\mathbb{C}^2$ an open subset and let $f:\mathcal{W}\rightarrow\mathbb{C}$ be a continuously differentiable function. Then $f$ is said \textit{holomorphic} if the Cauchy-Riemann equations hold for all coordinates $z_j=x_j+iy_j,$ with $j=1,2,$ i.e. \[u_{x_j}=v_{y_j},\quad u_{y_j}=-v_{x_j},\quad \forall z_j=x_j+iy_j\in\mathcal{W}.\]  

In addition, if $f$ is holomorphic in $\mathcal{W},$ then in each point $\zeta=(\zeta_1,\zeta_2)\in \mathcal{W}$ has an open neighborhood $D(\zeta,R_\zeta)$, such that the function $f$ can be expanded into a power series
$$f(z,w)=\sum_{n=1}^\infty\sum_{k_1+k_2=n}c_{k_1,k_2}(z-\zeta_1)^{k_1}(w-\zeta_2)^{k_2},$$
which converges for all $(z,w)\in D(\zeta,R_\zeta)$.





The definition of a complex manifold is analogous to the smooth manifold, but we require that the charts take on values in $\mathbb{C}^2$ and that the transition functions be holomorphic (see Figure \ref{fig-transition}).
\begin{figure}[h!]
  		\begin{overpic}[scale=0.22]{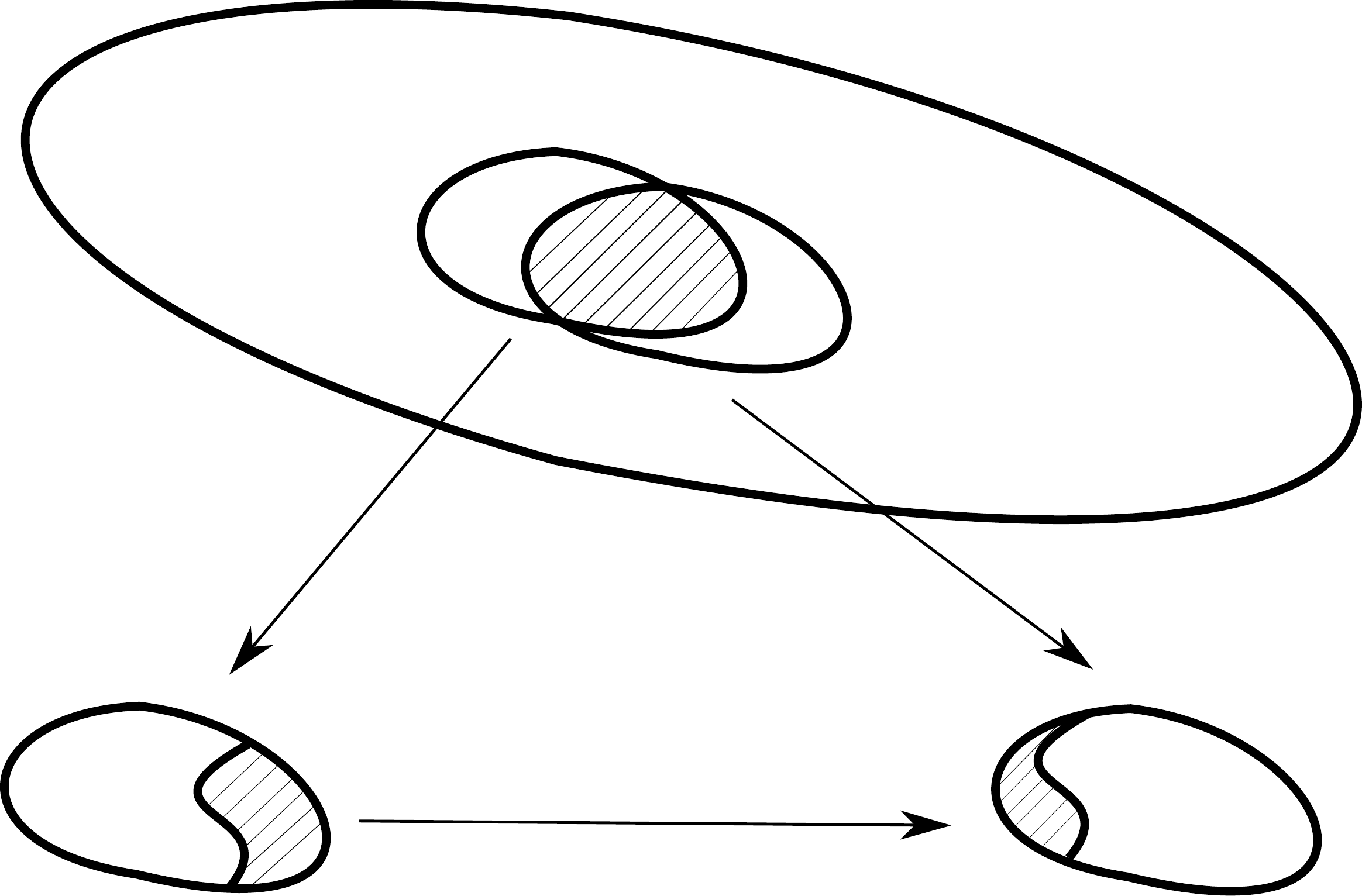}
		\put(26,55){{\scriptsize $U_j$ \par}}
		\put(64,45){{\scriptsize $U_i$ \par}}
        \put(82,56){{\scriptsize $X$ \par}}
		\put(17,27){{\scriptsize $\varphi_j$ \par}}
        \put(62,21){{\scriptsize $\varphi_i$ \par}}
        \put(44,9){{\scriptsize $\varphi_{ij}$ \par}}
		\end{overpic}
  \caption{\footnotesize{Complex manifolds require that the transition functions $\varphi_{ij}$ be holomorphic.}}
  \label{fig-transition}
\end{figure}

A \textit{holomorphic atlas} in a topological space $X\subseteq\mathbb{C}^2$ is a collection of pairs $(U_i, \varphi_i)$ called holomorphic charts where
\begin{itemize}
    \item Each $U_i$ is an open in $X$ and $X =\displaystyle\bigcup_{i}U_i;$
    \item Each $\varphi_i: U_i\rightarrow \mathbb{C}^n$ is a homomorphism over an open of $\mathbb{C}^n$, with $n\leq 2$;
    \item Whenever $U_i\cap U_j\neq\emptyset$ the transition 
    $$\varphi_{ij} = \varphi_i\circ \varphi^{-1}_j: \varphi_j(U_i\cap U_j)\rightarrow\varphi_i(U_i\cap U_j)$$
    is a holomorphic map.
\end{itemize}

A \textit{complex manifold} is a topological space $X$, Hausdorff and with enumerable base, equipped with a maximal holomorphic atlas. The number $n$ is called the complex dimension of $X$.

The following proposition will give us a way to obtain complex submanifolds. For more details see, for instance, \cite{Huybrechts,wellray}.

\begin{proposition}\label{submanifolds}
Let $\phi: X\subseteq\mathbb{C}^2\rightarrow \mathbb{C}$ be a holomorphic map between complex manifolds. Consider $b\in \phi(X)\subseteq \mathbb{C}$ such that the rank of $\phi$ is maximal ($rank(\phi) = 1$), for all $a\in \phi^{-1}(b)$. Then $\phi^{-1}(b)$ is a complex submanifold of X of dimension $1$.
\end{proposition}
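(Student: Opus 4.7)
The plan is to apply the holomorphic Implicit Function Theorem pointwise on the fiber $\phi^{-1}(b)$ and glue the resulting local graphs into a submanifold structure. First I would pick an arbitrary $a=(a_1,a_2)\in\phi^{-1}(b)$. Since $\mathrm{rank}(\phi)=1$ at $a$, at least one of the partial derivatives $\partial\phi/\partial z_1(a)$, $\partial\phi/\partial z_2(a)$ is nonzero; after possibly relabelling coordinates, I would assume $\partial\phi/\partial z_1(a)\neq 0$.

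Next I would invoke the holomorphic Implicit Function Theorem (cited in the excerpt as \cite{volker}) for the equation $\phi(z_1,z_2)=b$ near $a$: there exist an open neighborhood $V\subseteq\mathbb{C}$ of $a_2$ and a unique holomorphic map $h\colon V\to\mathbb{C}$ with $h(a_2)=a_1$ and $\phi(h(z_2),z_2)=b$ for every $z_2\in V$. Consequently a neighborhood of $a$ in $\phi^{-1}(b)$ is identified with the graph $\{(h(z_2),z_2):z_2\in V\}$, so the projection $\psi(z_1,z_2)=z_2$ gives a chart from $\phi^{-1}(b)$ onto the open subset $V$ of $\mathbb{C}$.

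To promote this to a submanifold chart on $X$ itself I would consider the map $\Phi(z_1,z_2)=(\phi(z_1,z_2)-b,\,z_2)$. Its holomorphic Jacobian has determinant equal to $\partial\phi/\partial z_1(a)\neq 0$, so by the holomorphic Inverse Function Theorem $\Phi$ is a biholomorphism between a neighborhood of $a$ in $X$ and an open set in $\mathbb{C}^2$; in these adapted coordinates $\phi^{-1}(b)$ is exactly the locus where the first coordinate vanishes, which is evidently a one-dimensional complex submanifold. Transition functions between two such adapted charts are holomorphic because they are compositions of the biholomorphisms $\Phi_1$ and $\Phi_2^{-1}$, while Hausdorffness and the existence of a countable base are inherited from $X$. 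The only delicate point is the case distinction on which partial derivative of $\phi$ is nonzero, but the two situations are handled symmetrically and yield a compatible holomorphic atlas, so I expect no real obstacle beyond bookkeeping.
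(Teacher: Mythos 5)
Your argument is correct and is precisely the standard regular-value proof (holomorphic implicit/inverse function theorem plus adapted charts of the form $\Phi(z_1,z_2)=(\phi(z_1,z_2)-b,\,z_2)$) that appears in the references the paper cites; the paper itself states Proposition~\ref{submanifolds} without proof, deferring to \cite{Huybrechts,wellray}. The only detail worth making explicit is that the uniqueness clause of the implicit function theorem is what guarantees the fiber coincides locally with the graph of $h$ (not merely contains it), but this is routine bookkeeping and your proof is complete.
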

\subsection{Geometric singular perturbation theory}
We consider singularly perturbed systems of differential equations
\begin{equation}\label{eq-def-slowfast-1}
\begin{aligned}
\left\{\begin{array}{l}
 \varepsilon\dot{\mathbf{x}}  =  f(\mathbf{x},\mathbf{y},\varepsilon), \\ 
    \dot{\mathbf{y}}  =  g(\mathbf{x},\mathbf{y},\varepsilon),
\end{array} \right.
\end{aligned}
\end{equation}
which is called \textit{slow-fast system}, where  $\varepsilon\in(0,\varepsilon_0),$ $\varepsilon_0$ small, $\mathbf{(x,y)}\in\mathcal{M}$, an open set $\mathcal{M}\subset\mathbb{R}^{4}$ and $f,g:(0,\varepsilon_0)\times \mathcal{M}\rightarrow \mathbb{R}^{4}$ are $C^r$. The dot $\cdot$ represents the derivative of the functions $\mathbf{x}(\tau)$ and $\mathbf{y}(\tau)$ with respect to the variable $\tau$.

If we write $t = \displaystyle\frac{\tau}{\varepsilon}$, then system \eqref{eq-def-slowfast-1} becomes
\begin{equation}\label{eq-def-slowfast-2}
\begin{aligned}
\left\{\begin{array}{l}
 \mathbf{x}' =  f(\mathbf{x},\mathbf{y},\varepsilon), \\ 
    \mathbf{y}' = \varepsilon g(\mathbf{x},\mathbf{y},\varepsilon),
\end{array} \right.
\end{aligned}
\end{equation}
in which the apostrophe ' denotes the derivative of the functions $\mathbf{x}(t)$ and $\mathbf{y}(t)$ with respect to the variable $t$. Notice that the parameter $\varepsilon = \displaystyle\frac{\tau}{t}$ represents the ratio of the time scales.

Consider equation \eqref{eq-def-slowfast-1} and set $\varepsilon = 0$. We obtain the so called \textit{reduced problem} given by
\begin{equation}\label{eq-def-slow-system}
    0 = f(\mathbf{x},\mathbf{y},0), \ \ \
    \dot{\mathbf{y}} = g(\mathbf{x},\mathbf{y},0).
\end{equation}

Observe that \eqref{eq-def-slow-system} is not an  ordinary differential equation, but it is an \textit{algebraic differential equation}.

Solutions of \eqref{eq-def-slow-system} are contained in the set
$$C_{0} = \Big{\{}(\mathbf{x},\mathbf{y})\in\mathcal{M}: \ f(\mathbf{x},\mathbf{y},0) = 0\Big{\}}.$$

The set $C_{0}$ is called \textit{critical set}. In the case where $C_{0}$ is a manifold, $C_{0}$ is called critical manifold.

On the other hand, setting $\varepsilon = 0$ in equation \eqref{eq-def-slowfast-2} we obtain the so called \textit{layer problem}
\begin{equation}\label{eq-def-fast-system}
    \mathbf{x}' = f(\mathbf{x},\mathbf{y},0), \ \ \
    \mathbf{y}' = 0.
\end{equation}

 Moreover, the system \eqref{eq-def-fast-system} can be seen as a system of ordinary differential equations, where $\mathbf{y}\in\mathbb{R}^{2}$ is a parameter and the critical set $C_{0}$ is a set of equilibrium points of \eqref{eq-def-fast-system}.

The main goal of \textit{geometric singular perturbation theory} is to study systems \eqref{eq-def-slow-system} and \eqref{eq-def-fast-system} in order to obtain information of the full system \eqref{eq-def-slowfast-1}. Observe that the systems \eqref{eq-def-slowfast-1} and \eqref{eq-def-slowfast-2} are equivalent when $\varepsilon > 0$, since they only differ by time scale.

We next introduce the notion of normally hyperbolic points.\\

Let $\mathbf{x}_{0}\in S$, for any set $S\subset\mathcal{M}$. We say that $\mathbf{x}_{0}$ is \textit{normally hyperbolic} if the $2\times 2$ matrix $Df_{\mathbf{x}}(\mathbf{x}_{0})$ does not have eigenvalues with zero real part. 

In what follows we present a version of \textit{Fenichel's Theorem} whose proof can be found in \cite{Szmolyan}.
\begin{theorem}\label{teo:fenichel}
Consider $\mathcal{M}$ a $C^{r+1}$ manifold, $2\leq r\leq \infty.$ Let $X_\varepsilon,$ $\varepsilon\in(-\varepsilon_0,\varepsilon_0)$ be a $C^r$ family of vector fields on $\mathcal{M},$ and assume $C_0$ a $C^r$ submanifold of $\mathcal{M}$ consisting entirely of equilibrium points of $X_0.$ If $S\subset C_0$ is a $j-$dimensional compact normally hyperbolic invariant manifold of the reduced vector field $X_R$ with a $j+j^s-$dimensional local stable manifold $\mathcal{W}^s$ and a $j+j^u-$dimensional local unstable manifold $\mathcal{W}^u,$ then there exists $\varepsilon_1>0$ such that
\begin{itemize}
    \item[(i)] There exists a $C^{r-1}$ family of manifolds $\{S_\varepsilon:\varepsilon\in(-\varepsilon_1,\varepsilon_1)\}$ with $S_0=S$ and $S_\varepsilon$ is a normally hyperbolic invariant manifold of $X_\varepsilon.$
    \item[(ii)] There are $C^{r-1}$ families of $(j+j^s+k^s)-$dimensional and $(j+j^u+k^u)-$dimensional manifolds $\{S_\varepsilon^s:\varepsilon\in(-\varepsilon_1,\varepsilon_1)\}$ and $\{S_\varepsilon^u:\varepsilon\in(-\varepsilon_1,\varepsilon_1)\}$ such that for $\varepsilon>0$ the manifolds $S_\varepsilon^s$ and $S_\varepsilon^u$ are local stable and unstable manifolds of $S_\varepsilon.$
\end{itemize}
\end{theorem}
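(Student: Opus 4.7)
The plan is to reduce the system to a Fenichel normal form near $S$ and then construct $S_\varepsilon$, together with its stable and unstable manifolds, by graph-transform arguments that exploit the normal hyperbolicity of $S$. First, since $S$ is a compact normally hyperbolic invariant submanifold of the critical set $C_0$ for $X_0$, I would use a tubular neighborhood together with the spectral splitting of the fast linearization $D_{\mathbf{x}} f$ along $S$ to introduce adapted coordinates $(y,u,v)$ in which $y \in S$, $u$ parameterizes the stable directions of the layer problem, and $v$ parameterizes the unstable directions. In these coordinates $X_0$ has the form of a fast linear contraction/expansion in $(u,v)$ that leaves $\{u=v=0\}$ pointwise fixed, and the full system can be put, after a further straightening, into a Fenichel normal form
\begin{equation*}
u' = A(y,\varepsilon) u + O(|(u,v)|^2), \quad v' = B(y,\varepsilon) v + O(|(u,v)|^2), \quad y' = \varepsilon\bigl(h(y,\varepsilon) + O(|(u,v)|)\bigr),
\end{equation*}
where $A$ has uniformly negative real spectrum and $B$ has uniformly positive real spectrum along $S$.

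Next I would prove (i) by realising $S_\varepsilon$ as the graph of a $C^{r-1}$ function $\sigma_\varepsilon \colon S \to \mathbb{R}^{k^s+k^u}$ obtained as the unique fixed point of a graph transform $\mathcal{T}_\varepsilon$. Given any candidate graph close to $S$, one pushes it forward under the full flow for a short time, cuts off outside a compact neighborhood to control boundary effects, and writes the image again as a graph. Normal hyperbolicity, in the guise of the spectral gap between the $O(1)$ contraction/expansion rates of $A,B$ and the $O(\varepsilon)$ drift in $y$, guarantees that for $\varepsilon$ small $\mathcal{T}_\varepsilon$ is a uniform contraction on a closed ball of a Banach space of $C^{r-1}$ sections over $S$. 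The contraction principle with parameters then yields both existence of the fixed point and its $C^{r-1}$ dependence on $\varepsilon$; invariance of $S_\varepsilon$ is automatic from the fixed-point equation, and $S_0 = S$ by uniqueness.

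For (ii), I would build the local stable manifold $S_\varepsilon^s$ as the union over $p \in S_\varepsilon$ of strong stable fibers $\mathcal{F}^s_\varepsilon(p)$, each consisting of points whose forward orbit shadows the orbit of $p$ at a decay rate strictly faster than the slow motion on $S_\varepsilon$. The fibers are produced by a second graph transform, this time applied to the cocycle along trajectories of $S_\varepsilon$; a classical Hadamard--Perron argument delivers a $C^{r-1}$ invariant foliation over $S_\varepsilon$, whose union is the desired $(j+j^s+k^s)$-dimensional manifold. The unstable counterpart $S_\varepsilon^u$ is built dually using the backward flow.

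The principal technical obstacle is uniformity as $\varepsilon \to 0$: the slow drift on $S$ is of order $\varepsilon$, so one must verify that the spectral gap between fast and slow directions does not degenerate when $\varepsilon$ is small, and that all constants (Lipschitz bounds, cutoff sizes, derivative estimates) can be chosen independently of $\varepsilon$. This is precisely what the Fenichel normal form buys, and compactness of $S$ is what lets the constants be chosen uniformly in $y$. The single derivative loss in the conclusion ($C^{r-1}$ from $C^r$ data) is the standard cost of the graph transform, coming from the need to differentiate the flow map once in order to identify the image as a graph.
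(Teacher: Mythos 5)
The paper does not prove this statement: it is quoted as a known version of Fenichel's theorem with the proof delegated to the reference \cite{Szmolyan}, so there is no in-paper argument to compare yours against. Your outline is a faithful sketch of the standard proof in that literature (Fenichel normal form near the compact critical manifold, graph transform for the slow manifold, Hadamard--Perron fibration for the stable and unstable manifolds), and the point you emphasize --- that compactness of $S$ and the $O(1)$ versus $O(\varepsilon)$ spectral gap give uniform contraction constants as $\varepsilon\to 0$ --- is exactly the heart of the matter. Two places where the sketch is thinner than a complete proof would need to be: first, the $C^{r-1}$ regularity of the fixed point does not follow from ``the contraction principle with parameters'' alone, since the graph transform contracts only in a Lipschitz or $C^0$ topology; one needs the fiber contraction theorem (or the $C^r$-section theorem), together with the observation that the tangential Lyapunov exponents on $S$ are $O(\varepsilon)$ so the required gap conditions hold for every finite $r$. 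Second, after the cutoff the resulting $S_\varepsilon$ is only \emph{locally} invariant (orbits may exit through the boundary of the neighborhood), and it is not unique --- it depends on the cutoff --- so ``$S_0=S$ by uniqueness'' should be replaced by the statement that the family can be chosen continuously (indeed $C^{r-1}$) in $\varepsilon$ with $S_0=S$. Neither issue changes the architecture of the argument.
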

Now, suppose that $S$ is given as in Theorem \ref{teo:fenichel}, then $Df_\mathbf{x}(\mathbf{x},\mathbf{y},0)$ is invertible for all $(\mathbf{x},\mathbf{y})\in S.$ From \textit{Implicit Function Theorem} there exists a smooth function $h_0$ defined on a compact domain $K\subset\mathbb{R}^2$ such that
$$S=\{(\mathbf{x},\mathbf{y})):\mathbf{x}=h_0(\mathbf{y})\}.$$
That is, $S$ is locally the graph of a function of $\mathbf{x}$ in terms of $\mathbf{y}$
\begin{theorem}
Assume the same hypotheses of Theorem \ref{teo:fenichel}. If $\varepsilon>0$ is sufficiently small, there exists a function $\mathbf{x}=h_\varepsilon(\mathbf{y})$, defined on $K$, so that the graph
$$S_\varepsilon=\{(\mathbf{x},\mathbf{y})):\mathbf{x}=h_\varepsilon(\mathbf{y})\}$$
is locally invariant under \eqref{eq-def-slowfast-2}. Moreover $h_\varepsilon$ is $C^r$, for any $r<\infty$, jointly in $\mathbf{y}$ and $\varepsilon$.
\end{theorem}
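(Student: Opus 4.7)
The plan is to obtain the graph representation by combining the invariant manifold $S_\varepsilon$ provided by Theorem \ref{teo:fenichel} with the Implicit Function Theorem applied to the natural projection onto the $\mathbf{y}$-coordinate. First I would invoke Theorem \ref{teo:fenichel}(i) to get, for $\varepsilon\in(-\varepsilon_1,\varepsilon_1)$, a $C^{r-1}$ family of normally hyperbolic invariant manifolds $S_\varepsilon$ with $S_0=S$ and with $S_\varepsilon$ depending $C^{r-1}$ on $\varepsilon$. Since $S$ is compact and locally written as $S=\{(\mathbf{x},\mathbf{y}):\mathbf{x}=h_0(\mathbf{y}),\,\mathbf{y}\in K\}$, I may assume that a tubular neighborhood of $S$ in $\mathcal{M}$ has coordinates in which the projection $\pi:(\mathbf{x},\mathbf{y})\mapsto \mathbf{y}$ restricts to a $C^{r-1}$ diffeomorphism $\pi|_{S_0}\colon S_0\to K$.

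Next, consider the evaluation map $\Phi\colon K\times(-\varepsilon_1,\varepsilon_1)\to \mathbb{R}^{2}$ defined by $\Phi(\mathbf{y},\varepsilon)=\pi\circ\sigma_\varepsilon(\mathbf{y})$, where $\sigma_\varepsilon$ is a $C^{r-1}$ local parametrization of $S_\varepsilon$ (which exists since $S_\varepsilon$ varies $C^{r-1}$ in $\varepsilon$ and coincides with $\sigma_0=\mathrm{id}_S$ when $\varepsilon=0$). At $\varepsilon=0$, $\Phi(\cdot,0)=\pi|_{S_0}$ is a diffeomorphism onto $K$, in particular its derivative with respect to $\mathbf{y}$ is invertible at every point. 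By the parametric form of the Implicit Function Theorem together with the compactness of $K$, for $\varepsilon$ sufficiently small $\Phi(\cdot,\varepsilon)$ remains a $C^{r-1}$ diffeomorphism onto a neighborhood of $K$. Composing its inverse with the $\mathbf{x}$-component of $\sigma_\varepsilon$ yields a function $h_\varepsilon\colon K\to\mathbb{R}^{2}$ such that
\begin{equation*}
S_\varepsilon=\{(\mathbf{x},\mathbf{y}):\mathbf{x}=h_\varepsilon(\mathbf{y}),\,\mathbf{y}\in K\}
\end{equation*}
and whose local invariance under \eqref{eq-def-slowfast-2} is inherited from the local invariance of $S_\varepsilon$.

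For the joint regularity statement, since $\Phi$ is $C^{r-1}$ jointly in $(\mathbf{y},\varepsilon)$ and has invertible $\mathbf{y}$-derivative, the Implicit Function Theorem returns the inverse as a $C^{r-1}$ map of $(\mathbf{y},\varepsilon)$, and composition with the $\mathbf{x}$-component of $\sigma_\varepsilon$ preserves this joint regularity. Because Theorem \ref{teo:fenichel} can be applied for every finite $r$ (the vector fields involved are $C^\infty$ in the present holomorphic context), one may upgrade the conclusion to $C^r$ for every finite $r$, as claimed.

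The main obstacle I expect is the passage from a $C^{r-1}$ family of embedded manifolds $\{S_\varepsilon\}$ to a single graph function $h_\varepsilon$ that is jointly smooth in $(\mathbf{y},\varepsilon)$: it requires setting up the parametrization $\sigma_\varepsilon$ carefully so that the Implicit Function Theorem may be applied with $\varepsilon$ as a parameter. Once this parametric framework is in place and the invertibility of $D_\mathbf{y}\Phi(\mathbf{y},0)$ is established uniformly on the compact set $K$, the rest is a routine application of the Implicit Function Theorem and of normal-hyperbolicity transversality.
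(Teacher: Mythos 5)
The paper does not actually prove this statement: it is quoted as the standard ``graph'' formulation of Fenichel's theorem, with the underlying Theorem \ref{teo:fenichel} cited from \cite{Szmolyan}, so there is no in-paper argument to compare against. Your derivation of the graph form from Theorem \ref{teo:fenichel}(i) is the standard one and is essentially sound: parametrize the $C^{r-1}$ family $S_\varepsilon$, compose with the projection $\pi$ onto the slow variable, and invert $\Phi(\mathbf{y},\varepsilon)=\pi\circ\sigma_\varepsilon(\mathbf{y})$ via the Implicit Function Theorem with $\varepsilon$ as a parameter, using compactness of $K$ to get uniform invertibility of $D_{\mathbf{y}}\Phi(\cdot,0)$. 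Two small points deserve more care than you give them. First, for $\varepsilon\neq 0$ the image of $\Phi(\cdot,\varepsilon)$ need not contain all of $K$ (it is only guaranteed to cover compact subsets of the interior of $K$), so to obtain $h_\varepsilon$ defined on all of $K$ one should either start from a slightly larger compact piece of the critical manifold or modify the vector field outside a neighborhood of $K$, as is customary in Fenichel theory. Second, the Inverse/Implicit Function Theorem only yields local invertibility; global injectivity of $\Phi(\cdot,\varepsilon)$ on $K$ for small $\varepsilon$ follows from uniform $C^1$-closeness to the diffeomorphism $\pi|_{S_0}$, which you use implicitly but should state. With these standard adjustments your argument is complete, and the upgrade from $C^{r-1}$ to $C^r$ for every finite $r$ (legitimate here because the vector fields are $C^\infty$) matches the usual formulation of the theorem.
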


\subsection{Two-dimensional holomorphic slow-fast  systems}
Consider the two-dimensional holomorphic slow-fast system  \eqref{main_eq}, where $f(z,w)=u_1+iv_1$ and $g(z,w)=u_2+iv_2$
are holomorphic functions  of the complex variables $(z,w)=(x_1+iy_1,x_2+iy_2)\in D.$ 

Recall that the fast system associated to system \eqref{main_eq} is given by
\begin{equation}\label{main_eq_2}
\begin{aligned}
\left\{\begin{array}{l}
z'=f(z,w)=u_1+iv_1,\\[5pt]
w'=\varepsilon g(z,w)=\varepsilon(u_2+iv_2),
\end{array} \right.
\end{aligned}
\end{equation}
where the apostrophe ' denotes the derivative of the functions $z(t)$ and $w(t)$ with respect to the variable $t,$ with $t=\tau/\varepsilon$. Notice that the critical set associated to system \eqref{main_eq_2} is defined as $C_0=\{(z,w)\in D:f(z,w)=0\}.$ 

In this context the reduced problem is given by
\begin{equation}\label{eq-def-slow-system_complex}
 0 = f(z,w), \\\
    \dot{w} = g(z,w),
\end{equation}
and the layer problem is defined as
\begin{equation}\label{eq-def-fast-system_complex}
       z' = f(z,w), \\\
    w' = 0.
\end{equation}
Observe that the systems \eqref{main_eq} and \eqref{main_eq_2} are equivalent when $\varepsilon > 0$, since they only differ by time scale. In addition, systems \eqref{eq-def-slow-system_complex} and \eqref{eq-def-fast-system_complex} allow us to obtain information about system \eqref{main_eq}.

Now, we introduce the notion of normal hyperbolicity in the complex context. Let $q_0=(z_0,w_0)\in C_0$ be an equilibrium point of the system \eqref{main_eq_2} for $\varepsilon=0$. Since $f,g$ are holomorphic functions, then the Cauchy-Riemann equations hold. Thus, the Jacobian matrix of the system in $\mathbb{R}^4$ associated to \eqref{main_eq_2} at $q_0$ is given by
\begin{equation}\label{mjacobiana11}
    J_\mathbb{R}(f,0)|_{q_0}=\begin{pmatrix}
(u_1)_{x_1} & -(v_1)_{x_1} & (u_1)_{x_2} & -(v_1)_{x_2} \\
(v_1)_{x_1} & (u_1)_{x_1} & (v_1)_{x_2} & (u_1)_{x_2}  \\
0 & 0 & 0 & 0 \\
0 & 0 & 0 & 0
\end{pmatrix},
\end{equation}
this implies that  the eigenvalues of $J_\mathbb{R}(f,0)|_{q_0}$ are $\lambda_\pm=(u_1)_{x_1}\pm i(v_1)_{x_1}.$ Hence, a subset $S\subset C_0$ is called normally hyperbolic provided that $\operatorname{Re}\left(\frac{\partial f}{\partial z}(q)\right)=(u_1)_{x1}(q)\neq 0$ for all $q\in S.$
\subsection{Briot-Bouquet Systems}
Briot-Bouquet systems have been widely used in the research literature (see, for instance, \cite{Hille,Needham1,Needham2}).

A Briot-Bouquet system is of the following form
\begin{equation}\label{eq_BB}
zw'=F(z, w),  \quad F(0,0)=0,
\end{equation}
where $F$ is a holomorphic function and the apostrophe ' denotes the derivative of the function $w(z)$ with respect to the complex variable $z$. 

The one-dimensional theory is well known since the original work of Briot and Bouquet (see \cite{Briot}). Here we will use the following result, which was proved in \cite[Proposition 1.1.1]{Iwasaki}.
\begin{proposition}[Briot-Bouquet criterion]\label{prop_BB}
If $F(z,w)=\lambda w+\mathcal{O}(z,w^2)$ and $\lambda\notin\mathbb{N}$, then \eqref{eq_BB} admits a unique holomorphic solution at $z=0$ satisfying $w(0)=0$. 
\end{proposition}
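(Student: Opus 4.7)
My plan is to combine a formal power series construction with a Banach fixed-point argument that exploits the hypothesis $\lambda \notin \mathbb{N}$ in an essential way.

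First, I would write $F(z,w) = \lambda w + G(z,w)$ where, by the assumption $F - \lambda w \in \mathcal{O}(z, w^2)$, the remainder splits as $G(z,w) = z\,P(z,w) + w^2\,Q(z,w)$ for some $P,Q$ holomorphic near the origin. Seeking a solution of the form $w(z)=\sum_{n\geq 1} a_n z^n$ (the constant term is killed by $w(0)=0$), I substitute into $zw'=F(z,w)$ and compare coefficients of $z^n$. Every monomial $z^i w^j$ appearing in $G$ has $i\geq 1$ or $j\geq 2$, so the coefficient $[z^n]\,G(z,w(z))$ depends only on $a_1,\ldots,a_{n-1}$. This yields a recursion
\begin{equation*}
(n-\lambda)\,a_n = \Phi_n(a_1,\ldots,a_{n-1}),
\end{equation*}
where $\Phi_n$ is a polynomial in the previous coefficients with coefficients built from the Taylor coefficients of $G$. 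Since $\lambda\notin\mathbb{N}$, the factor $n-\lambda$ is nonzero for every $n\geq 1$, and the recursion determines the sequence $(a_n)$ uniquely. This handles existence and uniqueness at the formal level.

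The substantive step is convergence. I would introduce, for small $r>0$, the Banach space $\mathcal{B}_r$ of holomorphic functions on $|z|<r$ with $w(0)=0$, equipped with the $\ell^1$-Taylor norm $\|w\|_r=\sum_{n\geq 1}|a_n|r^n$, which is submultiplicative. The operator $L\colon w\mapsto zw'-\lambda w$ has an explicit inverse $L^{-1}\bigl(\sum b_n z^n\bigr)=\sum \tfrac{b_n}{n-\lambda}\,z^n$, and the bound
\begin{equation*}
\|L^{-1}v\|_r \leq \delta^{-1}\,\|v\|_r, \qquad \delta:=\inf_{n\geq 1}|n-\lambda|>0,
\end{equation*}
uses the hypothesis $\lambda\notin\mathbb{N}$ in its sharpest form. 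The equation $zw'=F(z,w)$ becomes the fixed-point problem $w=T(w)$ with $T(w):=L^{-1}\bigl(G(z,w(z))\bigr)$.

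To close the argument I would verify that $T$ is a contraction on a small closed ball $\{\|w\|_r\leq\rho\}\subset\mathcal{B}_r$. Using $G=zP+w^2Q$ and submultiplicativity, I get $\|G(z,w(z))\|_r \leq r\,M_P + \rho^2\,M_Q$, where $M_P,M_Q$ bound the substituted series $P(z,w(z))$ and $Q(z,w(z))$ on the ball; differentiating in $w$ yields a parallel estimate $\|T(w_1)-T(w_2)\|_r\leq\delta^{-1}\bigl(r\,M'_P+2\rho M_Q+\rho^2 M'_Q\bigr)\|w_1-w_2\|_r$. Choosing $\rho$ small and then $r$ small forces both the self-map condition $\delta^{-1}(rM_P+\rho^2 M_Q)\leq\rho$ and a contraction constant below $1$, so Banach's fixed-point theorem supplies a unique $w\in\mathcal{B}_r$ solving $zw'=F(z,w)$; this fixed point necessarily has the Taylor coefficients produced by the recursion, which reconciles the formal and analytic solutions. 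The main obstacle I expect is precisely this convergence step: the formal calculation is routine, but controlling the composition $G(z,w(z))$ demands the specific choice of $\ell^1$-Taylor norm, and it is there — via $\|L^{-1}\|\leq\delta^{-1}$ — that the hypothesis $\lambda\notin\mathbb{N}$ is genuinely needed.
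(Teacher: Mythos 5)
Your argument is correct, and it is worth noting that the paper does not actually prove this proposition: it is quoted from the literature (Proposition 1.1.1 of the cited reference on Briot--Bouquet theory), where the classical proof runs through the same formal recursion $(n-\lambda)a_n=\Phi_n(a_1,\dots,a_{n-1})$ that you derive, but then establishes convergence by the method of majorants --- one dominates the $|a_n|$ by the Taylor coefficients of an auxiliary analytic equation solved via the implicit function theorem. Your replacement of that step by a Banach fixed-point argument in the weighted $\ell^1$-Taylor norm is a legitimate alternative: the decomposition $G=zP+w^2Q$, the inverse $L^{-1}$ with $\Vert L^{-1}\Vert\le\delta^{-1}$ where $\delta=\inf_{n\ge1}|n-\lambda|>0$ (a positive minimum, since $|n-\lambda|\to\infty$), and the Lipschitz estimate for the substitution operator are all sound, and the formal recursion correctly supplies uniqueness among all holomorphic germs rather than merely within the small ball. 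The trade-off is the usual one: the majorant method yields an explicit lower bound on the radius of convergence in terms of the majorant equation, whereas your contraction argument is structurally cleaner and makes transparent exactly where $\lambda\notin\mathbb{N}$ enters (through the uniform invertibility of $L$); for the purposes of this paper, where the proposition is applied with $\lambda=\varepsilon\beta/\alpha-1$ and only existence, uniqueness, and holomorphy near $z=0$ are used, either route suffices.
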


\section{Persistence of Invariant Manifold}\label{sec:BB}
This section aims to prove that centers, foci and nodes of the reduced problem associated with the holomorphic slow-fast system \eqref{main_eq}, whose Jacobian matrix of $f$ and $g$ is diagonalizable at the origin, are preserved by singular perturbation. 

For that, first we are going to construct one-dimensional invariant complex manifolds via the \textit{Briot-Bouquet Theory}. Thus, consider the complex differential system given by \eqref{main_eq}.
Without loss of generality, assume that $q_\varepsilon=(0,0)$ is an equilibrium point of \eqref{main_eq} in $D$. Suppose that the associated linearized system is such that the eigenvalues of the Jacobian matrix $J_\mathbb{C}(f,g)|_{q_\varepsilon}$ of $f,g$ are given by $\alpha,\beta\in\mathbb{C}$ and $J_\mathbb{C}(f,g)|_{q_\varepsilon}$ is diagonalizable. Hence, we can write \eqref{main_eq} as \eqref{per_sys_eq_nf}
where
\begin{equation}\label{series_fg}
   \tilde{f}(z,w)  = \displaystyle\sum_{n=2}^\infty\sum_{s+l=n}a_{s,l}z^s w^l \quad\text{and}\quad
   \tilde{g}(z,w)  =  \displaystyle\sum_{n=2}^\infty\sum_{s+l=n}b_{s,l}z^s w^l, 
\end{equation}
which convergent in some neighbourhood of $q_\varepsilon$. Here $s,l$ are nonnegative integers and $a_{s,l},b_{s,l}$ are complex constants. 

In this context, the critical set associated with complex system \eqref{per_sys_eq_nf} is given by $C_0=\{(z,w):\alpha z+\tilde{f}(z,w)=0\}.$ Emphasize that if $C_0$ is a normally hyperbolic critical manifold, then $\operatorname{Re}(\alpha)\neq 0.$  

To find an invariant manifold of system \eqref{per_sys_eq_nf}, we consider the following initial value problem
\begin{equation}\label{eq_nf}
    (\alpha z+\tilde{f}(z,w))\frac{dw}{dz}=\varepsilon(\beta w+\tilde{g}(z,w)),\quad w_\varepsilon(0)=0,\quad w'_\varepsilon(0)=0.
\end{equation}
In what follows we present a fundamental result that will be used to prove the main theorem of this section.
\begin{mtheorem}\label{main_teo}
Suppose that $C_0$ is a normally hyperbolic critical manifold associated with complex system \eqref{per_sys_eq_nf} and
consider $\varepsilon>0$ a small enough parameter. 
Then, system \eqref{per_sys_eq_nf} has a unique one-dimensional invariant complex manifold $C_\varepsilon$ passing through the equilibrium point $q_\varepsilon$, which is given by $$C_\varepsilon=\{(z,w):w=h_\varepsilon(z),\quad h_\varepsilon(0)=0,\quad h'_\varepsilon(0)=0\},$$ where $h_\varepsilon$ is holomorphic in $B=B(0,r)$.
\end{mtheorem}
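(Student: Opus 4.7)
The plan is to translate the invariance condition for the graph $w = h_\varepsilon(z)$ into a first-order scalar ODE and coerce it into Briot--Bouquet form so that Proposition \ref{prop_BB} applies. If $\{w = h_\varepsilon(z)\}$ is invariant under \eqref{per_sys_eq_nf}, then $h_\varepsilon$ must solve \eqref{eq_nf}, i.e.\
\begin{equation*}
(\alpha z + \tilde{f}(z, h_\varepsilon))\, h_\varepsilon' = \varepsilon\bigl(\beta h_\varepsilon + \tilde{g}(z, h_\varepsilon)\bigr).
\end{equation*}
The conditions $h_\varepsilon(0) = h_\varepsilon'(0) = 0$ suggest the substitution $w = z u$: any such $h_\varepsilon$ can be written uniquely as $h_\varepsilon(z) = z u(z)$ with $u$ holomorphic near $0$ and $u(0) = 0$, so searching for $h_\varepsilon$ is equivalent to searching for $u$.

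Next I would use $\tilde{f}, \tilde{g} = \mathcal{O}_2(z,w)$ to strip off the appropriate powers of $z$. From \eqref{series_fg}, $\tilde{f}(z, zu) = \sum_{s+l \geq 2} a_{s,l} z^{s+l} u^l$ is divisible by $z^2$, so write $\tilde{f}(z, zu) = z^2 \tilde{f}_1(z,u)$ and similarly $\tilde{g}(z, zu) = z^2 \tilde{g}_1(z,u)$, with $\tilde{f}_1, \tilde{g}_1$ holomorphic near the origin. Substituting $w = zu$, $w' = u + z u'$ into the invariance equation and cancelling a factor of $z$ yields
\begin{equation*}
(\alpha + z\tilde{f}_1)\, (u + zu') = \varepsilon(\beta u + z\tilde{g}_1).
\end{equation*}
Normal hyperbolicity forces $\operatorname{Re}(\alpha) \neq 0$, in particular $\alpha \neq 0$, so $\alpha + z\tilde{f}_1(z,u)$ is nonvanishing at $(0,0)$. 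Solving for $u'$ produces the Briot--Bouquet equation
\begin{equation*}
z u' = F(z, u), \qquad F(z, u) := \frac{(\varepsilon\beta - \alpha) u + z\bigl(\varepsilon \tilde{g}_1(z,u) - u\,\tilde{f}_1(z,u)\bigr)}{\alpha + z \tilde{f}_1(z,u)},
\end{equation*}
with $F$ holomorphic in a neighbourhood of $(0,0)$ and $F(0,0) = 0$.

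Finally I would verify the spectral condition of Proposition \ref{prop_BB}. Evaluating at $z = 0$ gives $F(0, u) = \bigl(\tfrac{\varepsilon\beta}{\alpha} - 1\bigr) u$, so $F(z, u) = \lambda(\varepsilon)\, u + \mathcal{O}(z, u^2)$ with $\lambda(\varepsilon) = \varepsilon\beta/\alpha - 1$. The key (and essentially only non-trivial) check is that $\lambda(\varepsilon) \notin \mathbb{N}$, and this is exactly where the smallness of $\varepsilon$ enters: for $\varepsilon > 0$ small enough $\lambda(\varepsilon)$ lies in an arbitrarily small neighbourhood of $-1$ and so is never a positive integer. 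Proposition \ref{prop_BB} then supplies a unique holomorphic solution $u_\varepsilon$ on some disk $B = B(0, r)$ with $u_\varepsilon(0) = 0$; defining $h_\varepsilon(z) := z u_\varepsilon(z)$ gives a holomorphic function on $B$ with $h_\varepsilon(0) = 0$, $h_\varepsilon'(0) = u_\varepsilon(0) = 0$, whose graph $C_\varepsilon$ is invariant by construction, and uniqueness of $C_\varepsilon$ in the prescribed class is inherited from the uniqueness clause in Proposition \ref{prop_BB}. The only nuisance I anticipate is ensuring that the disk $B$ can be chosen uniformly in $\varepsilon$; this follows because $F$ and $\lambda$ depend holomorphically (in fact rationally) on $\varepsilon$, so the Briot--Bouquet majorant argument converges on a common disk for all sufficiently small $\varepsilon > 0$.
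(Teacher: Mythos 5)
Your proposal is correct and follows essentially the same route as the paper: the substitution $w = zu$, reduction to a Briot--Bouquet equation with linear coefficient $\lambda = \varepsilon\beta/\alpha - 1$, and the observation that for small $\varepsilon>0$ this stays near $-1$ and hence avoids $\mathbb{N}$. The only (cosmetic) differences are that you factor $z^2$ out of $\tilde f, \tilde g$ directly rather than dividing by $\alpha z$ as the paper does, and the paper additionally invokes its Proposition~\ref{submanifolds} to certify that the graph of $h_\varepsilon$ is a one-dimensional complex submanifold.
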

\begin{proof}
    First, we prove that the initial value problem \eqref{eq_nf} has a unique solution $w=h_\varepsilon(z)$, which is holomorphic in $B.$ Indeed, consider the holomorphic transformation $w(z)=z\varphi(z),$ where $z\in B.$ Then, the initial value problem \eqref{eq_nf} can be written as
    \begin{equation}\label{PVI_1}
        (\alpha z+\tilde{f}(z,z\varphi))[\varphi+z\varphi']=\varepsilon(\beta z\varphi+\tilde{g}(z,z\varphi)),\quad \varphi_\varepsilon(0)=0.
    \end{equation}
 Since $C_0$ is a normally hyperbolic critical manifold, then $\operatorname{Re}(\alpha)\neq 0.$ Thus, dividing equation \eqref{PVI_1} by $\alpha z,$ we get
    \begin{equation}\label{eq_ivp_2}
        \left(1+\frac{\tilde{f}(z,z\varphi)}{\alpha z}\right)[\varphi+z\varphi']=\varepsilon\left(\frac{\beta\varphi}{\alpha }+\frac{\tilde{g}(z,z\varphi)}{\alpha z}\right).
    \end{equation}
    Now, we define the holomorphic functions $p(z,\varphi):=\frac{\tilde{f}(z,z\varphi)}{\alpha z}$ and $q(z,\varphi):=\frac{\tilde{g}(z,z\varphi)}{\alpha z}.$ Then, from \eqref{series_fg} and $w=z\varphi,$ we obtain 
    $$
   \tilde{p}(z,\varphi)  =  \frac{1}{\alpha}\displaystyle\sum_{n=2}^\infty\sum_{s+l=n}a_{s,l}\varphi^l z^{n-1} \quad \text{and} \quad
   \tilde{q}(z,\varphi)  =  \frac{1}{\alpha}\displaystyle\sum_{n=2}^\infty\sum_{s+l=n}b_{s,l}\varphi^l z^{n-1}, 
$$
which converge in some neighbourhood of $q_\varepsilon$. Rewriting  equation \eqref{eq_ivp_2}, we get
\begin{equation*}\label{eq_ivp_3}
        z\varphi'=-\varphi+\varepsilon\dfrac{\frac{\beta\varphi}{\alpha}+q(z,\varphi)}{1+p(z,\varphi)}:=F_\varepsilon(z,\varphi).
    \end{equation*}
    Thus, putting the linear part of $F_\varepsilon$ in an explicit way, we obtain that
\begin{equation}\label{eq_ivp_3}
        z\varphi'=-\left(1-\frac{\varepsilon\beta}{\alpha}\right)\varphi+\frac{\varepsilon b_{20}}{\alpha}z+\varepsilon Q(z,\varphi),
    \end{equation}
    where 
    $$Q(z,\varphi)=\left(q(z,\varphi)-\frac{b_{20}}{\alpha}z\right)-\frac{q(z,\varphi)p(z,\varphi)}{1+p(z,\varphi)}-\frac{\beta\varphi p(z,\varphi)}{\alpha(1+p(z,\varphi))}.$$ 
    Recall that $F_\varepsilon$ is a holomorphic function in a neighborhood of $q_\varepsilon,$ $F_\varepsilon(0,0)=0$ and $Q(z,\varphi)=\mathcal{O}_2(z,\varphi).$ For $\varepsilon>0$ small enough, we have that  $\frac{\varepsilon\beta}{\alpha}-1\notin\mathbb{N},$ then from \textit{Briot-Bouquet criterion} (see Proposition \ref{prop_BB}) we conclude that \eqref{eq_ivp_3} has a unique solution $\varphi=\Lambda_\varepsilon(z)$, which is holomorphic in $B.$ Using that $w=z\varphi,$ then the initial value problem \eqref{eq_nf} has a unique solution $w=h_\varepsilon(z)$, where $h_\varepsilon(z)=z \Lambda_\varepsilon(z)$ is holomorphic in  $B.$

Now, consider the holomoprhic function $H_\varepsilon(z,w)=w- h_\varepsilon(z),$ with $H_\varepsilon(0,0)=0$ and $\frac{\partial H_\varepsilon}{\partial z}(0,0)=0.$ Define the set $C_\varepsilon:=\{(z,w):w=h_\varepsilon(z),\quad h_\varepsilon(0)=0,\quad h'_\varepsilon(0)=0\}$. Emphasize that $C_\varepsilon=H_\varepsilon^{-1}(0).$ By construction, we get that $C_\varepsilon$ is an invariant set. 

Notice that the rank of $H$ is the rank of the matrix $1\times 2$, which is given by  
$$J_\mathbb{C}H_\varepsilon(z,w)=\left(\frac{\partial H}{\partial z}(z,w),\frac{\partial H}{\partial w}(z,w)\right)=\left(-\frac{\partial h_\varepsilon}{\partial z}(z),1\right).$$
Thus, $J_\mathbb{C}H$ has maximal rank (rank 1) in each point of $C_\varepsilon.$ From Proposition \ref{submanifolds}, we conclude that $C_\varepsilon$ is a complex manifold of dimension 1.
\end{proof}

\begin{remark}
    From the proof of Theorem \ref{main_teo}, we can deduce that this result still holds when we replace the hypothesis in which $C_0$ is a normally hyperbolic manifold by $\frac{\partial f}{\partial z}(0,0)=\alpha\neq 0,$ which is a weaker hypothesis.
\end{remark}
In the sequel, assume that:
\begin{itemize}
\item[{\bf (H)}]  the eigenvalues of the $J_\mathbb{C}(f,g)|_{q_\varepsilon}$ are $\alpha,\beta\in\mathbb{C}\setminus\{0\}$ and that $\alpha,\beta$ are of the same type, that is, both $\alpha$ and $\beta$ are pure imaginary complex numbers, real numbers, or complex numbers with nonzero real and imaginary parts.
\end{itemize}
 At this point, we can assume that $C_0$ is given as the graph of a function of $z$ in terms of $w$. Indeed, define $\Theta(z,w)=\alpha z+\tilde{f}(z,w)$ and $q_0=(0,0)$. Since $\Theta(q_0)=0$ and $\frac{\partial \Theta}{\partial z}(q_0)=\alpha\neq 0,$ then from \textit{Implicit Function Theorem}, there exist a neighborhood $U_{q_0}$ of $q_0$ and a holomorphic function $z=L(w)$ such that $L(0)=0$ and
$\Theta(L(w),w)=0,$ for all $w\in U_{q_0}.$ Therefore, locally the critical manifold is given by $C_0=\{(z,w):z=L(w),\quad L(0)=0,\quad w\in U\}.$

Recall that the dynamics on $C_0$ is given by 
\begin{equation}\label{q0_red}
    \dot{w}=\beta w+g(L(w),w)=:G(w).
\end{equation} 
From item $(b)$ of \cite[Theorem 1.1]{GGJ} we know that $G$ and $\beta w$ are 0-conformally conjugate.

We are ready to state the main result of this section.
\begin{mtheorem}\label{prop_center_2} Suppose that the eigenvalues of $J_\mathbb{C}(f,g)|_{q_\varepsilon}$ satisfy hypothesis {\bf (H)}, where $\varepsilon$ is a small enough positive parameter. Then,
    centers, foci, and nodes of the reduced problem associated to \eqref{main_eq} are persistent by singular perturbation.
\end{mtheorem}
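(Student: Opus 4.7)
The plan is to combine Theorem \ref{main_teo} with the normal-form result \cite[Theorem 1.1]{GGJ}, applied both to the reduced dynamics on $C_0$ and to the restriction of the perturbed system \eqref{per_sys_eq_nf} to the one-dimensional invariant complex manifold through $q_\varepsilon$.

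First I would invoke Theorem \ref{main_teo}, or rather the slightly stronger version stated in the Remark that follows its proof, which only requires $\alpha\neq 0$ in place of normal hyperbolicity of $C_0$. This strengthening is essential because in the center case $\alpha$ is purely imaginary, so $\operatorname{Re}(\alpha)=0$ and $C_0$ is not normally hyperbolic; hypothesis (H) still guarantees $\alpha\neq 0$. Thus for every sufficiently small $\varepsilon>0$ we obtain a unique one-dimensional invariant complex manifold
\[
C_\varepsilon=\{(z,w):w=h_\varepsilon(z)\},\qquad h_\varepsilon(0)=h'_\varepsilon(0)=0.
\]

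Next I would restrict the flow of \eqref{per_sys_eq_nf} to $C_\varepsilon$. Substituting $w=h_\varepsilon(z)$ into the fast equation gives the one-dimensional holomorphic ODE
\[
\dot{z}=\frac{1}{\varepsilon}\bigl(\alpha z+\tilde{f}(z,h_\varepsilon(z))\bigr)=\frac{\alpha}{\varepsilon}z+\mathcal{O}(z^2),
\]
where $\tilde{f}=\mathcal{O}_2$ together with $h_\varepsilon(z)=\mathcal{O}(z^2)$ forces the linear coefficient to be exactly $\alpha/\varepsilon$. Applying item (b) of \cite[Theorem 1.1]{GGJ}---the same tool the authors use to identify the reduced flow $\dot{w}=G(w)$ with its linear part $\dot{w}=\beta w$---this restricted ODE is $0$-conformally conjugate to $\dot{z}=(\alpha/\varepsilon)z$. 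Consequently the type of the equilibrium $q_\varepsilon$ as a zero of the restricted flow on $C_\varepsilon$ is exactly that of $\dot{z}=(\alpha/\varepsilon)z$; since the positive real factor $1/\varepsilon$ merely rescales time, it coincides with the type of $\dot{z}=\alpha z$.

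Finally I would invoke hypothesis (H): $\alpha$ and $\beta$ belong to the same class among purely imaginary, purely real, or strictly complex numbers with nonzero real and imaginary parts---exactly the trichotomy distinguishing centers, nodes and foci for a planar linear system. Therefore the type of $q_\varepsilon$ on $C_\varepsilon$, determined by $\alpha$, coincides with the type of $q_0$ on $C_0$, determined by $\beta$, and the three cases follow. The main technical subtlety I anticipate is the center case, where $C_0$ fails to be normally hyperbolic and Theorem \ref{main_teo} is not directly applicable; the point to verify is that the Briot--Bouquet step in its proof still goes through under the weaker hypothesis $\alpha\neq 0$ asserted in the Remark (for $\varepsilon$ small, the relevant coefficient $\varepsilon\beta/\alpha-1$ stays close to $-1$ and in particular outside $\mathbb{N}$). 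Once this is granted, the remainder of the argument is a clean combination of linearization via \cite{GGJ} with hypothesis (H).
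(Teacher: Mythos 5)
Your proposal is correct and follows essentially the same route as the paper: restrict the flow of \eqref{per_sys_eq_nf} to the invariant complex manifold $C_\varepsilon$ produced by Theorem \ref{main_teo}, identify the restricted flow with $\dot{z}=(\alpha/\varepsilon)z$ and the reduced flow with $\dot{w}=\beta w$ via item (b) of \cite[Theorem 1.1]{GGJ}, and conclude by matching types through hypothesis {\bf (H)}. Your explicit appeal to the Remark following Theorem \ref{main_teo} (requiring only $\alpha\neq 0$) to cover the center case, where $\operatorname{Re}(\alpha)=0$ and $C_0$ fails to be normally hyperbolic, is a point the paper's own proof glosses over by citing Theorem \ref{main_teo} directly.
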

\begin{proof}
    By Theorem \ref{main_teo}, we know that, for $\varepsilon>0$ small enough, system \eqref{per_sys_eq_nf}  has a unique one-dimensional invariant complex manifold $C_\varepsilon$ at $q_\varepsilon$, which is given by $$C_\varepsilon=\{(z,w):w=h_\varepsilon(z),\quad h_\varepsilon(0)=0,\quad h'_\varepsilon(0)=0\},$$ where $h_\varepsilon$ is holomorphic in $B$.

    Emphasize that the dynamics about $C_\varepsilon$ is given by 
    \begin{equation}\label{q0_sf}
        \dot{z}=\frac{\dot{w}}{h'_\varepsilon(z)}=\frac{\alpha}{\varepsilon} z+\frac{\tilde{f}(z,h_\varepsilon(z))}{\varepsilon}=:\Gamma_\varepsilon(z).
    \end{equation} 
    By item $(b)$ of \cite[Theorem 1.1]{GGJ} we can conclude that 
    $\Gamma_\varepsilon$ and $\frac{\alpha}{\varepsilon}z$ are 0-conformally conjugate. From \eqref{q0_red} and \eqref{q0_sf} and the fact that $\alpha$ and $\beta$ are of the same type the result follows.
\end{proof}

\section{Other families of holomorphic slow-fast systems}\label{sec:mainresults}
In this section, we are concerned with finding one-dimensional invariant complex manifolds of slow-fast systems whose Jacobian matrix is not necessarily diagonalizable at the origin. Furthermore, we do not require that the origin be an equilibrium point of the system.
\subsection{Uncoupled differential systems}
Consider the two-dimensional holomorphic slow-fast system \eqref{main_eq},
where $f,g$ are holomorphic functions defined in a punctured disk $D=D(0,R)\setminus\{0\}.$

 We assume that $\frac{\partial f}{\partial w}(z,w)\neq 0,$ for all $(z,w)\in C_0.$ From Proposition \ref{submanifolds}, we know that $C_0$ is a complex manifold. Moreover, we can assume that $C_0$ is given as the graph of a function of $w$ in terms of $z$. Indeed, since $\frac{\partial f}{\partial w}(z,w)\neq 0$ for all $(z,w)\in C_0,$ then from \textit{Implicit Function Theorem}, there exist an open set $U$ and a holomorphic function $w=\lambda(z)$ such that 
$f(z,\lambda(z))=0,$ for all $z\in U.$

Let us introduce the equations for the orbits of system \eqref{main_eq}:
\begin{equation}\label{uncouple_eq_22}
   \frac{dw}{dz}=\varepsilon\frac{g(z,w)}{f(z,w)}.
\end{equation}
Suppose that there exist holomorphic functions $\eta,\kappa$ defined in $\Omega=B\setminus L$ such that $\frac{g(z,w)}{f(z,w)}=\eta(z)\kappa(w)$ for all $z,w\in\Omega$, where $L$ is a ray starting at 0, $B=B(0,r)\setminus\{0\}$ and
\begin{itemize}
    \item either 0 is the single zero of $\eta$ (resp. $\kappa$) in $B$;
    \item or 0 is an isolated singularity of $\eta$ (resp. $\kappa$) and $\eta$ (resp. $\kappa$) has no zeros in $B$.
\end{itemize} 
From \cite[Corollary 6.16]{Conway}, we know that the holomorphic functions $\eta,\kappa: \Omega\rightarrow \mathbb{C}$ have a primitive  in $\Omega.$ Thus $1/\eta,1/\kappa$ are also holomorphic functions in $\Omega$ and consequently both $\eta,\kappa$ and $1/\eta,1/\kappa$  have primitives in $\Omega.$ Integrating the equation $\frac{dw}{dz}=\varepsilon\eta(z)\kappa(w)$, we get that there exist holomorphic functions $F,G$ such that $G(w)=\varepsilon F(z),$ $F'(z)=\eta(z)$ and $G'(w)=1/\kappa(w)$. 

In what follows we state an interesting result about the existence of invariant complex manifolds associated with slow-fast systems whose equation of the orbits is separable.
\begin{mtheorem}\label{uncouple1} Consider the slow-fast system \eqref{main_eq} and assume that there exist holomorphic functions $\eta,\kappa$ defined in $\Omega$ such that $\frac{g(z,w)}{f(z,w)}=\eta(z)\kappa(w),$ for all $z,w\in\Omega.$ Then, there exists a one-dimensional invariant complex manifold $C_\varepsilon$ associated with the differential system \eqref{main_eq}, such that
\begin{itemize}
    \item[(a)] the flow on $C_\varepsilon$ converges to the slow flow as $\varepsilon\to0$.
    \item[(b)] the dynamics over $C_\varepsilon$ is given by $\dot{z}= g(z,w).$
\end{itemize}
\end{mtheorem}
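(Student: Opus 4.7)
The strategy is to exploit separability to construct a holomorphic first integral of \eqref{main_eq} whose regular level sets supply the invariant complex manifolds, and then apply Proposition \ref{submanifolds}.

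First, I would derive the first integral. Along any trajectory with $f \neq 0$, the orbit equation is $\frac{dw}{dz} = \varepsilon g/f = \varepsilon\,\eta(z)\kappa(w)$. The hypotheses guarantee that $\eta$ and $\kappa$ are nowhere zero on $\Omega$, so $1/\kappa$ is holomorphic there; \cite[Corollary 6.16]{Conway} then supplies holomorphic primitives $F$ of $\eta$ and $G$ of $1/\kappa$, and separation of variables produces the candidate
\[ \Phi_\varepsilon(z,w) := G(w) - \varepsilon F(z). \]
A direct computation along solutions of \eqref{main_eq}, using the factorization $g = \eta\kappa f$, gives
\[ \dot\Phi_\varepsilon \;=\; \frac{\dot w}{\kappa(w)} - \varepsilon\,\eta(z)\,\dot z \;=\; \frac{g(z,w)}{\kappa(w)} - \eta(z)\,f(z,w) \;=\; 0, \]
so for any admissible constant $c$ the level set $C_\varepsilon := \Phi_\varepsilon^{-1}(c)$ is an invariant set of \eqref{main_eq}.

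Next, I would upgrade a level set to a one-dimensional complex submanifold via Proposition \ref{submanifolds}. The differential $d\Phi_\varepsilon = \bigl(-\varepsilon\eta(z),\,1/\kappa(w)\bigr)$ has its second component nonvanishing on all of $\Omega\times\Omega$, so $\Phi_\varepsilon$ has maximal rank $1$ everywhere in its domain; Proposition \ref{submanifolds} then identifies $C_\varepsilon$ as a $1$-dimensional complex submanifold. Items (a) and (b) are read off the construction. The factorization $g = \eta\kappa f$ shows that $g$ vanishes identically on $C_0$; fixing the constant $c$ so that $C_\varepsilon$ meets $C_0$ at a prescribed point, the implicit function theorem applied to $\Phi_\varepsilon(z,w) = c$ yields $w = w_\varepsilon(z)$ with $w_\varepsilon \to \lambda$ as $\varepsilon \to 0$ on the relevant branch, and the restriction of $\dot w = g(z,w)$ to $C_\varepsilon$ then converges to the slow flow on $C_0$, which is (a). For (b), one resolves the level-set relation locally and reads off the restricted vector field from \eqref{main_eq} to obtain the stated dynamical description on $C_\varepsilon$.

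I expect the principal obstacle to lie in the global geometry rather than in the dynamical calculation. The domain $\Omega = B \setminus L$ is a slit punctured disk precisely to permit single-valued holomorphic primitives $F$ and $G$, and some care is needed to select a branch of the implicit solution $w = w_\varepsilon(z)$ that is single-valued on a region containing the trajectories of interest, and to verify that the limiting identification $C_\varepsilon \to C_0$ in (a) is compatible with this choice. Once these domain issues are handled, invariance and the complex-manifold property follow immediately from the computations above together with Proposition \ref{submanifolds}.
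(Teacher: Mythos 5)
Your proposal follows essentially the same route as the paper: the same first integral $H_\varepsilon(z,w)=G(w)-\varepsilon F(z)$ built from the primitives of $\eta$ and $1/\kappa$, the same invocation of Proposition \ref{submanifolds} via the Jacobian $\left(-\varepsilon\eta(z),\,1/\kappa(w)\right)$, and the same implicit-function-theorem strategy for items (a) and (b). The only substantive difference is one of detail: the paper makes (a) quantitative by expanding $h_\varepsilon(z)=\lambda(z)+\varepsilon Q(z,\varepsilon)$ and proving $d_H(C_0,C_\varepsilon)=\mathcal{O}(\varepsilon)$, and it obtains (b) by explicitly differentiating the relation $H_\varepsilon=0$ along trajectories, two computations you defer but whose outline you state correctly.
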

\begin{proof}
Consider the holomoprhic function $H_\varepsilon(z,w)=G(w)-\varepsilon F(z)$ and the set $C_\varepsilon=\{(z,w):G(w)=\varepsilon F(z)\}$. Recall that $C_\varepsilon=H_\varepsilon^{-1}(0).$ From \eqref{uncouple_eq_22}, we obtain that $C_\varepsilon$ is an invariant set. 

Emphasize that the rank of $H_\varepsilon$ is the rank of the matrix $1\times 2$, which is given by  
$$J_\mathbb{C}H_\varepsilon(z,w)=\left(\frac{\partial H_\varepsilon}{\partial z}(z,w),\frac{\partial H_\varepsilon}{\partial w}(z,w)\right)=\left(-\varepsilon\eta(z),\frac{1}{ \kappa(w)}\right).$$
Hence, $J_\mathbb{C}H_\varepsilon$ has maximal rank (rank 1) in each point of $C_\varepsilon.$ By Proposition \ref{submanifolds}, we conclude that $C_\varepsilon$ is a complex manifold of dimension 1.

Now, we shall prove item $(a).$ Deriving the equation $G(w)=\varepsilon F(z),$ we get that $1=\varepsilon k(w)\eta(z),$ which implies that $f(z,w)=\varepsilon g(z,w).$ Therefore, $C_\varepsilon\subset D_\varepsilon=\{(z,w):f(z,w)=\varepsilon g(z,w)\}$ and $D_0=C_0.$

Since $\frac{\partial H_\varepsilon}{\partial w}(z,w)\neq 0$ for all $(z,w)\in C_\varepsilon,$ then from \textit{Implicit Function Theorem}, there exist an open set $V$ and a holomorphic function $w=h_\varepsilon(z)$ such that 
$H_\varepsilon(z,h_\varepsilon(z))=0,$ for all $z\in V.$ Then, expanding $h_\varepsilon(z)$ in Taylor series around $\varepsilon=0,$ we get that $h_\varepsilon(z)=\lambda(z)+\varepsilon Q(z,\varepsilon),$ for all $\varepsilon\in [0,\varepsilon_0]$ and $z\in V\cap U,$ where $Q$ is a holomorphic function and $C_0=\{(z,w):w=\lambda(z),\, z\in U\}$ is locally the critical manifold.  

In what follow, we show that the Hausdorff distance between $C_0$ and $C_{\e}$ is of order $\e$, that is $d_H(C_0,C_{\e})=\mathcal{O}(\e).$ Recall that the Hausdorff distance $d_H$ between nonempty subsets $A$ and $B$ is given by  
\[
d_H(A,B)=\max\left\{\sup_{x\in A}\inf_{y\in B}d(x,y)\,,\,\sup_{x\in B}\inf_{y\in A}d(x,y)\right\}.
\]
Hence, $d_H(C_0,C_{\e})=\mathcal{O}(\e)$ provided that the following statements hold:
\begin{itemize}
\item[i.] for each $p\in C_{\e}$ there exists $q\in C_0$ such that $d(p,q)=\mathcal{O}(\e);$
\item[ii.] for each $q\in C_0$ there exists $p\in C_{\e}$ such that $d(p,q)=\mathcal{O}(\e).$
\end{itemize}

In the sequel, we shall verify item $(i)$. Item $(ii)$ can be verified analogously.

Consider the point $p=(z,w)\in C_\varepsilon$ and take $q=(z,\xi)\in C_0.$ Then, $w=h_\varepsilon(z)=\lambda(z)+\varepsilon Q(z,\varepsilon)$  and $\xi=\lambda(z).$ Thus, 
$$d(p,q)=||(0,w-\xi)||=\left|\left|\left(0,\varepsilon Q(z,\varepsilon)\right)\right|\right|=\left|Q(z,\varepsilon)\right|\varepsilon\leq\varepsilon M,$$
where we have used that $|Q(z,\varepsilon)|\leq M,$ for all $(z,\varepsilon)\in K_{V\cap U}\times [0,\varepsilon_0]$, with $K_{V\cap U}$ a compact subset of $V\cap U$. This implies that $d(p,q)=\mathcal{O}(\varepsilon).$

Finally, we prove item $(b)$. Since $H_\varepsilon(z,w)=0$ for all $(z,w)\in C_\varepsilon,$ then $\frac{\partial H_\varepsilon}{\partial z}\dot{z}+\frac{\partial H_\varepsilon}{\partial w}\dot{w}=0.$ Consequently, the dynamics over $C_\varepsilon$ is determined by
\[
\dot{w}= -\dfrac{\dfrac{\partial H_\varepsilon}{\partial z}\dot{z}}{\dfrac{\partial H_\varepsilon}{\partial w}}= -\dfrac{-\varepsilon\eta(z)\dot{z}}{\dfrac{1}{\kappa(w)}}=\varepsilon \underbrace{\eta(z)\kappa(w)}\dot{z}=\dfrac{ g(z,w)}{f(z,w)}f(z,w)= g(z,w).
\]
\end{proof}
Emphasize that it is possible to adapt the previous result for uncoupled slow-fast systems of the form:
\begin{equation}\label{uncouple_eq_1}
\begin{aligned}
\left\{\begin{array}{l}
\varepsilon\dot{z}=f(z),\\[5pt]
\dot{w}=g(w),
\end{array} \right.
\end{aligned}
\end{equation}
where $f, g$ are holomorphic functions defined in $\Omega.$ Thus, the following result is a direct consequence of the above.

\begin{corollary}\label{teo:eq_pole}
    Equilibrium points and poles of order $n$ of the reduced problem associated to \eqref{uncouple_eq_1} are persistent by singular perturbation.
\end{corollary}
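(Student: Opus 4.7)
The plan is to recognize Corollary \ref{teo:eq_pole} as a direct application of Theorem \ref{uncouple1}. First I would observe that for the uncoupled system \eqref{uncouple_eq_1}, the ratio that governs the orbit equation is automatically separable: $g(z,w)/f(z,w)=g(w)/f(z)$, so one sets $\eta(z):=1/f(z)$ and $\kappa(w):=g(w)$, whence $g(z,w)/f(z,w)=\eta(z)\kappa(w)$ trivially. Thus, up to translating the distinguished point of the reduced problem to the origin, the uncoupled slow-fast system falls squarely inside the framework of Theorem \ref{uncouple1}.

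Next I would verify the dichotomy on $\eta$ and $\kappa$ required by Theorem \ref{uncouple1} in each of the two situations covered by the corollary. For an equilibrium of the reduced problem we have $f(0)=0$ and $g(0)=0$; since $f$ and $g$ are holomorphic and not identically zero, these are isolated zeros, so on a small punctured disk $B$ the function $\kappa=g$ has $0$ as its unique zero (first bullet) while $\eta=1/f$ has $0$ as an isolated singularity with no zero (second bullet). For a pole of order $n$ of the reduced equation $\dot w=g(w)$, the function $g$ has a pole of order $n$ at $0$, so $\kappa=g$ fits the second bullet; the behavior of $f$ is handled analogously, according to whether the distinguished $z$-coordinate is a zero or a singularity of $f$. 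In either situation both $\eta$ and $\kappa$ meet the hypotheses of Theorem \ref{uncouple1} on a suitable $\Omega=B\setminus L$.

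Having verified the hypotheses, I would invoke Theorem \ref{uncouple1} to conclude the existence of a one-dimensional invariant complex manifold $C_\varepsilon$ of \eqref{uncouple_eq_1} with $d_H(C_0,C_\varepsilon)=\mathcal{O}(\varepsilon)$, whose restricted dynamics is $\dot w=g(w)$. Since this coincides with the slow dynamics of the reduced problem, the local dynamical picture around the distinguished point is reproduced on $C_\varepsilon$: an equilibrium of the reduced problem sits at a corresponding equilibrium on $C_\varepsilon$, and a pole of order $n$ of $\dot w=g(w)$ persists as a pole of order $n$ of the restricted flow on $C_\varepsilon$.

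The main obstacle I anticipate is organizational rather than technical, namely keeping straight the case distinctions so that the zeros and singularities of $f$ and $g$ are simultaneously accommodated by the two-alternative hypothesis of Theorem \ref{uncouple1}, and choosing the domain $\Omega=B\setminus L$ consistently for both factors. Once the setup is in place, no further computation is required: the corollary reduces to reading off items (a) and (b) of Theorem \ref{uncouple1} in the uncoupled case.
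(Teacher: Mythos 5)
Your proposal is correct and follows essentially the same route as the paper, which presents Corollary \ref{teo:eq_pole} precisely as a direct consequence of Theorem \ref{uncouple1} applied to the uncoupled system with $\eta(z)=1/f(z)$ and $\kappa(w)=g(w)$. Your explicit verification of the two-alternative hypothesis on $\eta$ and $\kappa$ in the equilibrium and pole cases simply fills in details the paper leaves implicit.
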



\begin{example}
Let $n$ be natural number with $n\geq 2.$ Consider the following system
\begin{equation}\label{uncouple_eq_3}
\begin{aligned}
\left\{\begin{array}{l}
\varepsilon\dot{z}=z^n,\\[5pt]
\dot{w}=g(w),
\end{array} \right.
\end{aligned}
\end{equation}
where $g$ is one of the normal forms given in \cite[Theorem 1.1]{GGJ}. Recall that $C_0=\{(z,w)\in \mathbb{C}^2:z=0\}$ is the critical manifold associated with system \eqref{uncouple_eq_3}, which is not normally hyperbolic. From Theorem \ref{uncouple1}, there exists an invariant complex manifold $C_\varepsilon$ given by:
\begin{itemize}
    \item $C_\varepsilon=\{(z,w):z^{n-1}\ln(w)=\varepsilon\frac{\eta}{-n+1}\},$ provided that $g(w)=\eta w,$ with $\eta\in\mathbb{C}.$
    \item $C_\varepsilon=\{(z,w):z^{n-1}=\varepsilon\frac{m-1}{n-1}w^{m-1}\},$ provided that $g(w)=w^m,$ with $m\geq 2.$
    \item $C_\varepsilon=\{(z,w):z^{n-1}=\varepsilon\frac{m+1}{-n+1}w^{-(m+1)}\},$ provided that $g(w)=\frac{1}{w^m},$ with $m\geq 2.$
    \item $C_\varepsilon=\left\{(z,w):z^{n-1}\left(\frac{w^{-m+1}}{-m+1}+\ln(w)\right)=\varepsilon\frac{\gamma}{-n+1}\right\},$ provided that $g(w)=\frac{\gamma w^m}{1+w^{m-1}},$ with $m\geq 2$ and $\gamma\in\mathbb{C}.$
\end{itemize}
 Moreover, in either case we have that $C_\varepsilon$ converges to $C_0$ when $\varepsilon$ tends to 0.
\end{example}
\subsection{Coupled differential equations}
Consider the system \eqref{couple_eq_1}, 
where $g, G$ are holomorphic functions defined in $\Omega$ such that $G'(w)=1/g(w),$ $\alpha,\beta\in\mathbb{C}\setminus\{0\}.$ In this constext, the critical manifold of \eqref{couple_eq_1} is $C_0=\{(z,w):z=-\frac{\beta G(w)}{\alpha}\}$.

Let us introduce the equations for the orbits of system \eqref{couple_eq_1}:
\begin{equation}\label{couple_eq_22}
    \varepsilon\frac{dz}{dw}=\frac{\alpha z+\beta G(w)}{g(w)}.
\end{equation}

Since $\dot{w}=g(w),$ then $G(w)=t.$ From the first equation of \eqref{couple_eq_1}, we get the linear differential equation given by $\varepsilon\dot{z}=\alpha z+\beta t.$ Recall that $z=-\frac{\beta}{\alpha^2}(\alpha t+\varepsilon)$ is a solution of previous equation. Thus, we obtain the equation $z=-\frac{\beta}{\alpha^2}(\alpha G(w)+\varepsilon)$, which satisfies the equation of orbits \eqref{couple_eq_22}. 

In the next result we prove that the set of points that satisfies the above equation is in fact a one-dimensional invariant complex manifold. Moreover, if we assume that the critical manifold is not normally hyperbolic, then the \textit{geometric singular perturbation theory} does not determine the type of stability of the invariant manifold $C_\varepsilon$. However, in the following theorem  we are giving conditions to ensure when the invariant manifold $C_\varepsilon$ is exponentially attractive (see Figure \ref{fig-exponential}).

\begin{figure}[h!]
  		\begin{overpic}[scale=0.5]{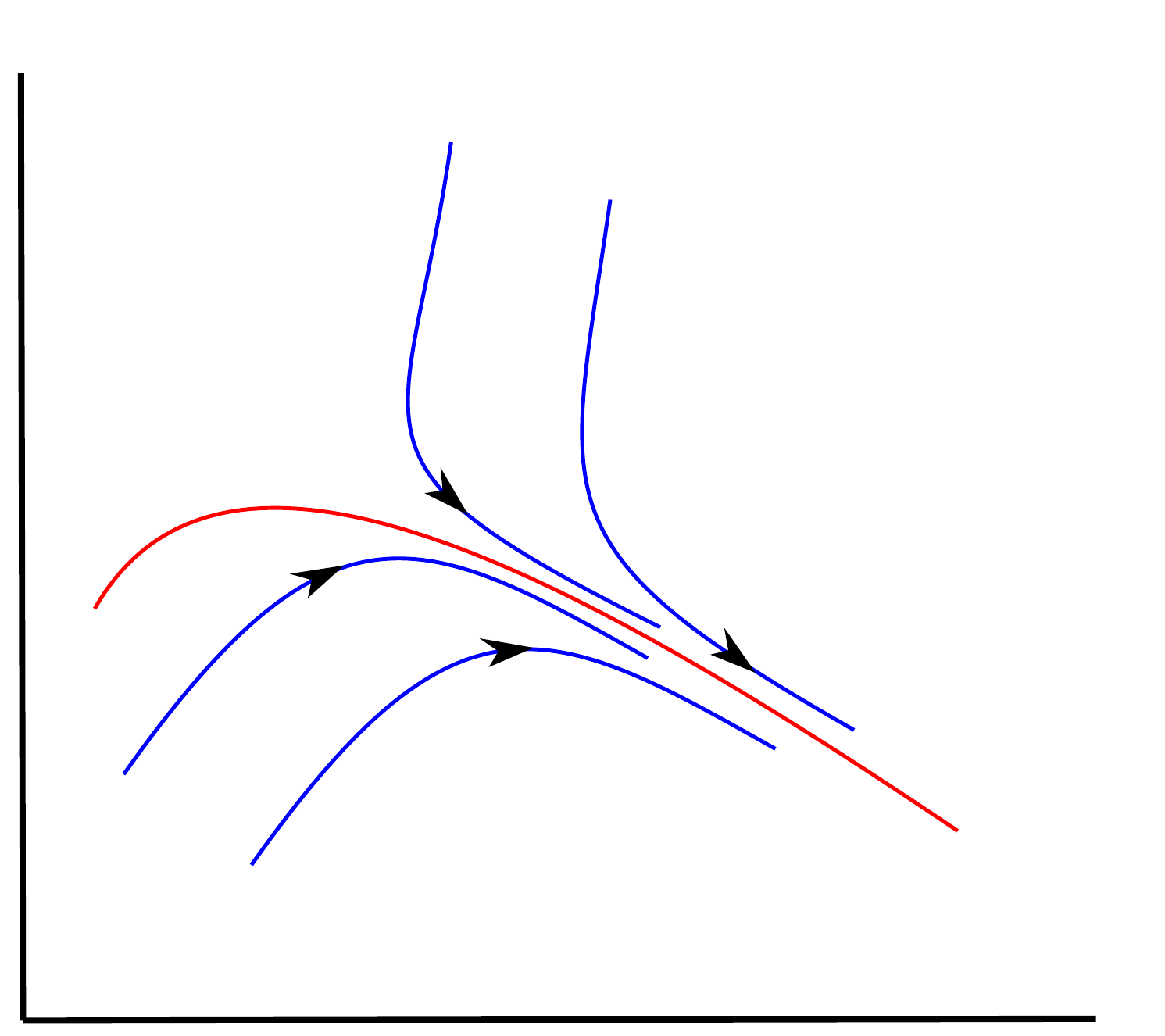}
		\put(96,0){$\mathbb{C}$}
		\put(0,86){$\mathbb{C}$}
		\put(84,15){$C_\varepsilon$}
		\end{overpic}
  \caption{\footnotesize{$C_\varepsilon$ is exponentially attractive in a certain region $\mathcal{R}$, when $C_0$ is not normally hyperbolic.}}
  \label{fig-exponential}
\end{figure}

\begin{mtheorem}\label{couple} Consider the slow-fast system \eqref{couple_eq_1}. Then, there exists a one-dimensional invariant complex manifold $C_\varepsilon$ associated with the differential system \eqref{couple_eq_1}, such that
\begin{itemize}
    \item[(a)] The flow on $C_\varepsilon$ converges to the slow flow as $\varepsilon\to0$.
    \item[(b)] The dynamics over $C_\varepsilon$ is given by $\dot{w}=g(w).$
    \item[(c)] If $\alpha=i\alpha_2,$ with $\alpha_2\in\mathbb{R}\setminus\{0\},$ then the orbit of system \eqref{couple_eq_22} with initial condition $z(w_0) = z_0$ stays exponentially close to the invariant manifold $C_\varepsilon$ in the region $\mathcal{R}=\{(z,w):\alpha_2\Im(G(w)-G(w_0))\geq\eta>0\}$, for some $\eta>0$. 
\end{itemize}
\end{mtheorem}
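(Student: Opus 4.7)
\textbf{Proof proposal for Theorem \ref{couple}.} My plan is to work directly with the explicit candidate for the invariant manifold that the authors already exhibited in the paragraph preceding the theorem. Define
$$H_\varepsilon(z,w)=\alpha^{2}z+\alpha\beta\,G(w)+\varepsilon\beta,\qquad C_\varepsilon:=H_\varepsilon^{-1}(0),$$
so that $C_\varepsilon=\{(z,w):z=z_\varepsilon(w)\}$ with $z_\varepsilon(w)=-\beta G(w)/\alpha-\varepsilon\beta/\alpha^{2}$. The Jacobian $J_{\mathbb{C}}H_\varepsilon=(\alpha^{2},\alpha\beta/g(w))$ has rank $1$ on all of $\Omega$ (recall $\alpha\neq 0$ and $g$ does not vanish on its domain), so Proposition \ref{submanifolds} yields that $C_\varepsilon$ is a one-dimensional complex submanifold. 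Invariance is a direct computation: along a solution of \eqref{couple_eq_1}, using $G'(w)=1/g(w)$,
$$\frac{d}{dt}H_\varepsilon(z,w)=\alpha^{2}\dot z+\alpha\beta\,G'(w)\dot w=\frac{\alpha^{2}(\alpha z+\beta G(w))}{\varepsilon}+\alpha\beta,$$
and substituting $\alpha z+\beta G(w)=-\varepsilon\beta/\alpha$, valid on $C_\varepsilon$, gives $0$.

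Item (a) then follows because $C_\varepsilon$ is the graph $z=z_\varepsilon(w)=-\beta G(w)/\alpha+O(\varepsilon)$, which converges uniformly on compacta to $C_0=\{z=-\beta G(w)/\alpha\}$. Item (b) is immediate: on $C_\varepsilon$ the dynamics reduces to the second equation $\dot w=g(w)$, which is decoupled.

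The key content is item (c), and the crucial observation will be that the ``defect'' variable
$$\tilde z(t):=z(t)-z_\varepsilon(w(t))=z(t)+\frac{\beta G(w(t))}{\alpha}+\frac{\varepsilon\beta}{\alpha^{2}}$$
satisfies a linear, autonomous equation. Computing
$$\dot{\tilde z}=\dot z+\frac{\beta G'(w)\dot w}{\alpha}=\frac{\alpha z+\beta G(w)}{\varepsilon}+\frac{\beta}{\alpha},$$
and rewriting $\alpha z+\beta G(w)=\alpha\tilde z-\varepsilon\beta/\alpha$, the two terms $\pm\beta/\alpha$ cancel and we obtain the beautifully simple
$$\dot{\tilde z}=\frac{\alpha}{\varepsilon}\,\tilde z,\qquad\text{hence}\qquad \tilde z(t)=\tilde z(0)\,\exp\!\left(\frac{\alpha t}{\varepsilon}\right).$$

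The remaining step is to translate this $t$-dependence into a $w$-dependence, which is where the region $\mathcal{R}$ enters. Since $\dot w=g(w)$ and $G'=1/g$, we have $G(w(t))=G(w_0)+t$, so $t=G(w)-G(w_0)$. Substituting $\alpha=i\alpha_2$ and taking absolute values,
$$|\tilde z(w)|=|\tilde z(w_0)|\,\Bigl|\exp\!\Bigl(\tfrac{i\alpha_2(G(w)-G(w_0))}{\varepsilon}\Bigr)\Bigr|=|\tilde z(w_0)|\exp\!\Bigl(-\tfrac{\alpha_2\,\Im(G(w)-G(w_0))}{\varepsilon}\Bigr).$$
On the region $\mathcal{R}=\{\alpha_2\Im(G(w)-G(w_0))\geq\eta>0\}$ this yields the exponential bound $|\tilde z(w)|\leq|\tilde z(w_0)|e^{-\eta/\varepsilon}$, which is precisely the assertion that the orbit remains exponentially close to $C_\varepsilon$. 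The only delicate point in this last step is that the ``time'' $t$ becomes complex once we view $G(w)-G(w_0)$ off the real axis; the identity $G(w(t))=G(w_0)+t$ should be interpreted analytically along the orbit, so I would briefly justify that the linear ODE for $\tilde z$ can be integrated along this complex path by analytic continuation before reading off the modulus. This is the main obstacle, but it is essentially bookkeeping since both sides are entire functions of $t$.
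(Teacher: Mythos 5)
Your proposal is correct and follows essentially the same route as the paper: the same explicit graph $z=-\frac{\beta}{\alpha^{2}}(\alpha G(w)+\varepsilon)$, the same rank argument via Proposition \ref{submanifolds}, and for (c) the same linearization of the defect $v=z-h_\varepsilon(w)$, which the paper integrates in the $w$-variable (obtaining $\varepsilon\,dv/dw=i\alpha_2 v/g(w)$) while you integrate in $t$ and then substitute $t=G(w)-G(w_0)$ --- an equivalent computation. Your explicit verification that $\frac{d}{dt}H_\varepsilon=0$ on $C_\varepsilon$ is in fact slightly more careful than the paper's ``by construction.''
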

\begin{proof}
Consider the holomoprhic function $H_\varepsilon(z,w)=z+\frac{\beta}{\alpha^2}(\alpha G(w)+\varepsilon)$ and the set $C_\varepsilon=\{(z,w):z=-\frac{\beta}{\alpha^2}(\alpha G(w)+\varepsilon)\}$. Recall that $C_\varepsilon=H_\varepsilon^{-1}(0).$ By construction $C_\varepsilon$ is an invariant set. 

Emphasize that the rank of $H_\varepsilon$ is the rank of the matrix $1\times 2$, which is given by  
$$J_\mathbb{C}H_\varepsilon(z,w)=\left(\frac{\partial H_\varepsilon}{\partial z}(z,w),\frac{\partial H_\varepsilon}{\partial w}(z,w)\right)=\left(1,\frac{\beta}{ \alpha g(w)}\right).$$
Hence, $J_\mathbb{C}H_\varepsilon$ has maximal rank (rank 1) in each point of $C_\varepsilon.$ By Proposition \ref{submanifolds}, we conclude that $C_\varepsilon$ is a complex manifold of dimension 1.

Now, we shall prove item $(a).$ For that, it is enough to show that the Hausdorff distance between $C_0$ and $C_{\e}$ is of order $\e$ or equivalently that the following statements hold:
\begin{itemize}
\item[i.] for each $p\in C_{\e}$ there exists $q\in C_0$ such that $d(p,q)=\mathcal{O}(\e);$
\item[ii.] for each $q\in C_0$ there exists $p\in C_{\e}$ such that $d(p,q)=\mathcal{O}(\e).$
\end{itemize}

In the sequel, we shall verify item $(i)$. Item $(ii)$ can be verified analogously.

Consider the point $p=(z,w)\in C_\varepsilon$ and take $q=(\xi,w)\in C_0.$ Then, $z=-\frac{\beta}{\alpha^2}(\alpha G(w)+\varepsilon)$ and $\xi=-\frac{\beta}{\alpha}G(w).$ Thus, 
$$d(p,q)=||(z-\xi,0)||=\left|\left|\left(-\frac{\beta}{\alpha^2}\varepsilon,0\right)\right|\right|=\left|\frac{\beta}{\alpha^2}\right|\varepsilon.$$
This implies that $d(p,q)=\mathcal{O}(\varepsilon).$


Next, we prove item $(b)$. Since $H_\varepsilon(z,w)=0$ for all $(z,w)\in C_\varepsilon,$ then $\frac{\partial H_\varepsilon}{\partial z}\dot{z}+\frac{\partial H_\varepsilon}{\partial w}\dot{w}=0.$ Consequently, the dynamics over $C_\varepsilon$ is determined by
\[
\dot{w}= -\dfrac{\dfrac{\partial H_\varepsilon}{\partial z}\dot{z}}{\dfrac{\partial H_\varepsilon}{\partial w}}= -\dfrac{\dot{z}}{\dfrac{\beta}{\alpha g(w)}}=-\dfrac{\alpha z+\beta G(w)}{\dfrac{\varepsilon\beta}{\alpha g(w)}}=-\dfrac{-\dfrac{\varepsilon\beta}{\alpha}}{\dfrac{\varepsilon\beta}{\alpha g(w)}}=g(w).
\]

Finally, we prove item $(c).$ Define $h_\varepsilon(w)=-\frac{\beta}{\alpha^2}(\alpha G(w)+\varepsilon).$ Hence, the invariant manifold is given by $C_\varepsilon=\{(z,w):z=h_\varepsilon(w)\}.$

Now, we perform the change of variables $v = z-h_\varepsilon(w)$ in equation \eqref{couple_eq_22} obtaining:
\begin{equation}\label{new_eq}
    \varepsilon\frac{dv}{dw}=\frac{i\alpha_2}{g(w)}v.
\end{equation}

Recall that the solution of \eqref{new_eq} with initial condition $v(w_0) = z(w_0)-h_\varepsilon(w_0)$ can be written as
$v(w)=v(w_0)e^{\frac{i\alpha_2 (G(w)-G(w_0))}{\varepsilon}}.$ Thus, 
$$|v(w)|=|v(w_0)|e^{-\dfrac{\alpha_2\Im(G(w)-G(w_0))}{\varepsilon}}\leq |v(w_0)|e^{-\dfrac{\eta}{\varepsilon}}.$$
Since $\eta>0,$ then any solution gets exponentially closer to the invariant manifold $C_\varepsilon.$ 
\end{proof}
\begin{example}
    Consider the system defined as 
\begin{equation}\label{ex_eq_1}
\begin{aligned}
\left\{\begin{array}{l}
\varepsilon\dot{z}=iz+w^2,\\[5pt]
\dot{w}=\frac{1}{2w},
\end{array} \right.
\end{aligned}
\end{equation}
with $(z_0,w_0)=(\varepsilon+i,1)$. Notice that $G(w)=w^2$ and $\alpha=i.$ Since $G'(w)=2w,$ from Theorem \ref{couple} there exists a one-dimensional invariant complex manifold $C_\varepsilon$ associated with the differential system \eqref{ex_eq_1}, which is exponentially attractive on $\mathcal{R}=\{(z,w)\in\mathbb{C}^2:\operatorname{Re}(w)\operatorname{Im}(w)>0\}.$
\end{example}

\section{Fenichel manifold approximation}\label{sec:Fenichelaprox}
This section is focused on using Laurent series and \textit{Fenichel's Theorem} to approximate complex manifolds and find smooth manifolds without complex structure of the slow-fast system \eqref{main_eq_aprox}. In particular, we are interested in studying the dynamics of the system \eqref{main_eq_aprox}. 
\subsection{Non-existence of smooth manifolds with complex structure}
Below we present 2 families of slow-fast systems that, despite having associated smooth locally invariant manifolds, have no complex structure.

\subsubsection{Linear-$w^n$ case}\label{sec:linear_zn} Consider the following system
\begin{equation}\label{linear_eq_3}
\begin{aligned}
\left\{\begin{array}{l}
\varepsilon\dot{z}=\alpha z+\beta w,\\[5pt]
\dot{w}=w^n,
\end{array} \right.
\end{aligned}
\end{equation}
where $\alpha,\beta\in\mathbb{C}\setminus\{0\}$ and $n\geq 2$. 
Recall that the critical manifold $C_0$ associated with system \eqref{linear_eq_3} is normally hyperbolic provided that $\operatorname{Re}{(\alpha)}=\alpha_1\neq 0$. Let $S_0$ be a compact subset of $C_0.$

We now apply the \textit{Fenichel's Theorem} to the equivalent $\mathbb{C}^\infty$ four-dimensional real system associated with the system \eqref{linear_eq_3}, then there exists a smooth locally invariant manifold $S_\varepsilon=\{(z,w):z=h_\varepsilon(w)\}$ of the slow-fast system \eqref{linear_eq_3}, which is diffeomorphic to $S_0.$

Emphasize that this does not guarantee that the complex function $h_\varepsilon(z)$ is a holomorphic function. Indeed, suppose that there exists an invariant manifold $C_\varepsilon=\{(z,w):z=h_\varepsilon(w)\},$ where 
\begin{equation}\label{ref_laurent_1}
    h_\varepsilon(w)=a_0+\dots+a_nw^n+\dots+a_{2n-1}w^{2n-1}+\mathcal{O}(w^{2n}).
\end{equation} Since $\dot{z}=h_\varepsilon'(w)\dot{w}$ then we obtain the following equation
\begin{equation}\label{ref_eq_1}
    \alpha h_\varepsilon(w)+\beta w=h_\varepsilon'(w)\varepsilon w^n.
\end{equation}
Substituting \eqref{ref_laurent_1} in \eqref{ref_eq_1}, we get that $a_i=0,$ for all $i\neq kn-(k-1),$ $a_1=-\frac{\beta}{\alpha},$ $a_n=-\frac{\beta\varepsilon}{\alpha^2},$ and $a_{kn-(k-1)}=-\frac{\beta^k\varepsilon^k\prod_{j=2}^k [(j-1)n-(j-2)]}{\alpha^{k+1}},$ with $k\geq 2.$
Therefore, $$C_\varepsilon=\left\{(z,w):z=-\frac{\beta}{\alpha}w-\frac{\beta\varepsilon}{\alpha^2}w^n+\sum_{k=2}^\infty a_{kn-(k-1)}w^{kn-(k-1)}\right\}.$$ 
Notice that the series $\sum_{k=2}^\infty a_{kn-(k-1)}w^{kn-(k-1)}$ diverges for $w\neq 0$, which contradicts the existence of the holomorphic function $h_\varepsilon.$


\subsubsection{Linear-$\frac{\gamma w^n}{1+w^{n-1}}$ case}\label{sec:linear_znn} Consider the following system
\begin{equation}\label{linear_eq_5}
\begin{aligned}
\left\{\begin{array}{l}
\varepsilon\dot{z}=\alpha z+\beta w,\\[5pt]
\dot{w}=\frac{\gamma w^n}{1+w^{n-1}},
\end{array} \right.
\end{aligned}
\end{equation}
where $\alpha,\beta\in\mathbb{C}\setminus\{0\}$ and $n\geq 2$. Recall that the critical manifold $C_0$ associated with system \eqref{linear_eq_5} is normally hyperbolic provided that $\operatorname{Re}{(\alpha)}=\alpha_1\neq 0$. Take $S_0$ a compact subset of $C_0.$

We now apply the \textit{Fenichel's Theorem} to the equivalent $\mathbb{C}^\infty$ four-dimensional real system associated with the system \eqref{linear_eq_5}, then there exists a smooth locally invariant manifold $S_\varepsilon=\{(z,w):z=h_\varepsilon(w)\}$ of the slow-fast system \eqref{linear_eq_5}, which is diffeomorphic to $S_0.$

Emphasize that this does not guarantee that the complex function $h_\varepsilon(z)$ is a holomorphic function. Indeed, suppose that  there exists an invariant manifold $C_\varepsilon=\{(z,w):z=h_\varepsilon(w)\},$ where 
\begin{equation}\label{ref_laurent_2}
    h_\varepsilon(w)=a_0+\dots+a_nw^n+\dots+a_{2n-1}w^{2n-1}+\mathcal{O}(w^{2n}).
\end{equation}
Since $\dot{z}=h_\varepsilon'(w)\dot{w},$ then 
\begin{equation}\label{ref_eq_2}
    \alpha h_\varepsilon(w)+\beta w=h_\varepsilon'(w)\varepsilon \frac{\gamma w^n}{1+w^{n-1}}.
\end{equation}
Substituting \eqref{ref_laurent_2} in \eqref{ref_eq_2}, we get that $a_i=0,$ for all $i\neq kn-(k-1),$ $a_1=-\frac{\beta}{\alpha},$ $a_n=-\frac{\beta\gamma\varepsilon}{\alpha^2},$ and $a_{kn-(k-1)}=-\frac{\beta\gamma\varepsilon\prod_{j=2}^k [\alpha-((j-1)n-(j-2))\gamma\varepsilon]}{\alpha^{k+1}},$ with $k\geq 2.$
Therefore, $$C_\varepsilon=\left\{(z,w):z=-\frac{\beta}{\alpha}w-\frac{\beta\gamma\varepsilon}{\alpha^2}w^n+\sum_{k=2}^\infty a_{kn-(k-1)}w^{kn-(k-1)}\right\}.$$ 
Notice that the series $\sum_{k=2}^\infty a_{kn-(k-1)}w^{kn-(k-1)}$ diverges for $w\neq 0$, which contradicts the existence of the holomorphic function $h_\varepsilon.$

The following result is a direct consequence of \cite[Theorem 1.1]{GGJ} and of cases \ref{sec:linear_zn} and \ref{sec:linear_znn}.
\begin{proposition}
    Consider system \eqref{main_eq} such that $g$ has a zero of order $n>1$ and $f$ is a linear holomorphic function, then system \eqref{main_eq} has no complex manifolds of form $C_\varepsilon=\{(z,w):z=h_\varepsilon(w)\}$.
\end{proposition}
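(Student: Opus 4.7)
The plan is to reduce the general situation to the two explicit cases already analyzed in Sections \ref{sec:linear_zn} and \ref{sec:linear_znn} by invoking the normal form classification of scalar holomorphic vector fields given in \cite{GGJ}.

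First I would apply item (b) of Theorem 1.1 of \cite{GGJ} to the one-dimensional vector field $\dot w = g(w)$. This supplies a $0$-conformal conjugacy, i.e., a local biholomorphism $w = \Phi(\tilde w)$ at the origin, carrying $g$ onto one of the GGJ normal forms. Since $g$ has a zero of order $n>1$, only the two normal forms $\tilde g(\tilde w) = \tilde w^n$ and $\tilde g(\tilde w) = \frac{\gamma \tilde w^n}{1 + \tilde w^{n-1}}$ are admissible; the remaining three candidates ($\dot w = 1$, $\dot w = (a+ib)w$ and $\dot w = 1/w^n$) are incompatible with the order hypothesis, as they correspond respectively to a regular point, a simple zero, and a pole.

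Next I would transfer the invariance problem to the new coordinate. A holomorphic invariant graph $z = h_\varepsilon(w)$ in the original coordinates corresponds to $z = h_\varepsilon(\Phi(\tilde w)) =: \tilde h_\varepsilon(\tilde w)$ in the new ones, and holomorphy is preserved by composition. The transformed fast equation is $\varepsilon \dot z = \alpha z + \tilde\beta(\tilde w)$ with $\tilde\beta(\tilde w) = \beta\,\Phi(\tilde w)$ holomorphic; hence the invariance condition reads
\begin{equation*}
\varepsilon \,\tilde h_\varepsilon'(\tilde w)\,\tilde g(\tilde w) \;=\; \alpha\, \tilde h_\varepsilon(\tilde w) + \tilde\beta(\tilde w).
\end{equation*}
For each of the two admissible normal forms I would then repeat the Taylor-coefficient comparison carried out in Sections \ref{sec:linear_zn} and \ref{sec:linear_znn}. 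Matching the coefficient of $\tilde w^{kn-(k-1)}$ yields a recurrence for $a_{kn-(k-1)}$ whose dominant factor is the product $\prod_{j=2}^{k}[(j-1)n-(j-2)]$ (or its analogue with the extra $\gamma\varepsilon$ terms in the second normal form). This product grows super-exponentially in $k$, forcing the formal series for $\tilde h_\varepsilon$ to have zero radius of convergence, contradicting the assumed holomorphy.

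The main obstacle I anticipate is that the change of coordinate turns the linear $f(z,w) = \alpha z + \beta w$ into $\tilde f(z,\tilde w) = \alpha z + \beta\,\Phi(\tilde w)$, which is no longer linear in $\tilde w$. Consequently the recurrence in the new coordinate is not literally the one solved in Sections \ref{sec:linear_zn} and \ref{sec:linear_znn}, and one must verify that the higher-order terms coming from $\Phi$ perturb only sub-leading parts of the coefficients, so that the factorial growth in the dominant term survives and still produces divergence. An alternative route, which avoids this verification entirely, is to work directly in the original coordinates: write $g(w) = c_n w^n + c_{n+1} w^{n+1} + \cdots$ with $c_n \neq 0$ and plug $h_\varepsilon(w) = \sum_k a_k w^k$ into $\varepsilon h_\varepsilon'(w) g(w) = \alpha h_\varepsilon(w) + \beta w$; the same kind of factorial recursion on the indices $kn-(k-1)$ appears, driven purely by the leading $c_n w^n$ term of $g$, and one concludes divergence by exactly the argument of Sections \ref{sec:linear_zn} and \ref{sec:linear_znn}.
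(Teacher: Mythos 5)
Your proposal takes exactly the paper's route: the paper's entire argument for this proposition is the single assertion that it is ``a direct consequence of [GGJ, Theorem~1.1] and of cases \ref{sec:linear_zn} and \ref{sec:linear_znn}'', i.e.\ conjugate $g$ to one of the two order-$n$ normal forms and reuse the divergent-coefficient computations already carried out there. The obstacle you flag --- that the conjugacy $w=\Phi(\tilde w)$ turns the forcing term $\beta w$ into the nonlinear $\beta\Phi(\tilde w)$, so the recurrences of Sections \ref{sec:linear_zn} and \ref{sec:linear_znn} do not literally apply and one must check that the factorial growth survives the sub-leading corrections --- is genuine and is not addressed in the paper at all, so your treatment (and in particular your direct-coordinates alternative, which avoids the conjugacy entirely) is, if anything, more careful than the published argument.
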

\subsection{Approximation of complex invariant manifold}
Here we present 2 families of slow-fast systems that, in the case of having associated locally invariant smooth manifolds with complex structure, can be approximated via Laurent series and also the singularity associated with the reduced problem is preserved by singular perturbation.
\subsubsection{Linear-linear case}\label{sec:linear_linear}
Consider the following system
\begin{equation}\label{linear_eq_2}
\begin{aligned}
\left\{\begin{array}{l}
\varepsilon\dot{z}=az+bw,\\[5pt]
\dot{w}=cz+dw,
\end{array} \right.
\end{aligned}
\end{equation}
where $a\in\mathbb{C}\setminus\{0\}$ and $b,c,d\in\mathbb{C}$. 
Assume that the critical manifold $C_0$ associated with system \eqref{linear_eq_2} is normally hyperbolic, that is, $\operatorname{Re}{(a)}\neq 0$.

Notice that the reduced problem associated with system \eqref{linear_eq_2} is given by
\begin{equation}\label{linear_eq_2_0}
\begin{aligned}
\left\{\begin{array}{l}
0=az+bw,\\[5pt]
\dot{w}=\left(\frac{ad-bc}{a}\right)w=:\widetilde{g}(w).
\end{array} \right.
\end{aligned}
\end{equation}
Thus, the equilibrium point is $w_0=0$ and $\widetilde{g}'(w_0)=\frac{ad-bc}{a}=\alpha+i\beta,$
where $\alpha=\operatorname{Re}\left(\frac{ad-bc}{a}\right)$ and $\beta=\operatorname{Im}\left(\frac{ad-bc}{a}\right).$
In addition, the Jacobian matrix at the equilibrium point of system \eqref{linear_eq_2_0} is
\[J_\mathbb{R}\widetilde{g}|_{w_0}=\begin{pmatrix}
   \alpha  & -\beta \\
   \beta  & \alpha
\end{pmatrix}.
\]
The determinant $D$ and the trace $T$ are
\[D=\alpha^2+\beta^2\quad\text{and}\quad T=2\alpha.\]

\begin{itemize}
    \item $\beta\neq 0.$ We have that if $\alpha>0$ then the origin is a repelling focus of the reduced problem \eqref{linear_eq_2_0}, if $\alpha<0$ then the origin is an attracting focus of \eqref{linear_eq_2_0}.
\item $\beta=0.$ We have that if $\alpha>0$ then the origin is a repelling node of the reduced problem \eqref{linear_eq_2_0} and if $\alpha<0$ then the origin is an attracting node of \eqref{linear_eq_2_0}.
\end{itemize}
Therefore, we get the following table:
	\begin{equation*}\label{table00}
\begin{array}{|| c |c| c | c|c |c||}
\hline
\alpha & j^u & j^s \\
\hline\hline
+	&2 &0  \\
\hline
- &0 &2 \\
\hline
\end{array}
\end{equation*}
where $j^u,j^s$ have been defined in Theorem \ref{teo:fenichel}. In this case, the eigenvalues of the Jacobian matrix \eqref{mjacobiana11} are $\lambda_\pm=\operatorname{Re}{(a)}\pm i\operatorname{Im}{(a)}$ Thus, we have the following table: 
	\begin{equation*}\label{table1}
\begin{array}{|| c |c| c | c|c |c||}
\hline
\operatorname{Re}{(a)} & k^u & k^s \\
\hline\hline
+	&2 &0  \\
\hline
-& 0 &2 \\
\hline
\end{array}
\end{equation*}
where $k^u,k^s$ have been defined in Theorem \ref{teo:fenichel}. Hence, if $\alpha,\operatorname{Re}{(a)}>0$, then $j^u+k^u=4$ and $j^s+k^s=0$. if $\alpha,\operatorname{Re}{(a)}<0$, then $j^u+k^u=0$ and $j^s+k^s=4.$ If $sign(\alpha)\neq sign(\operatorname{Re}{(a)})$, then $j^u+k^u=2$ and $j^s+k^s=2.$ From item $(ii)$ of Theorem \ref{teo:fenichel}:
\begin{itemize}
    \item If $\alpha,\operatorname{Re}{(a)}>0,$ then the equilibrium point $q_\varepsilon=(0,0)$ of system \eqref{linear_eq_2} is a global repelling point. 
    \item If $\alpha,\operatorname{Re}{(a)}<0,$ then the equilibrium point $q_\varepsilon=(0,0)$ of system \eqref{linear_eq_2} is a global attracting point.
    \item If $sign(\alpha)\neq sign(\operatorname{Re}{(a)})$, then the equilibrium point $q_\varepsilon=(0,0)$ of system \eqref{linear_eq_2} is a saddle point.
\end{itemize}
Notice that when $\alpha=0$ we can not use item $(ii)$ of Theorem \ref{teo:fenichel}, however, $C_0$ is normally hyperbolic, thus if we take a compact subset $S_0$ of $C_0$, we can apply the  \textit{Fenichel's Theorem} to the equivalent $\mathbb{C}^\infty$ four-dimensional real system associated with the system \eqref{linear_eq_2}, then there exists a smooth locally invariant manifold $S_\varepsilon=\{(z,w):z=h_\varepsilon(w)\}$ of the slow-fast system \eqref{linear_eq_2}, which is diffeomorphic to $S_0.$

Although this does not guarantee that the complex function $h_\varepsilon(z)$ is a holomorphic function, we would like to know the dynamics on the Fenichel manifold if it exists. Hence, suppose that  $C_\varepsilon=\{(z,w):z=h_\varepsilon(w)\},$ where $h_\varepsilon(w)=\lambda_0+\lambda_1 w+\mathcal{O}(w^2).$ Since $\dot{z}=h_\varepsilon'(w)\dot{w},$ then 
$$ ah_\varepsilon(w)+bw=h_\varepsilon'(w)\varepsilon((\alpha+i\beta)w).$$
This implies that $\lambda_i=0,$ for all $i\neq 1$ and  $\lambda_1=\frac{-b}{a-\varepsilon(\alpha+i\beta)}.$ 
Therefore, $$C_\varepsilon=\left\{(z,w):z=\frac{-b}{a-\varepsilon(\alpha+i\beta)}w\right\}.$$ 
In particular, if $\alpha=0,$ then $$C_\varepsilon|_{\alpha=0}=\left\{(z,w):z=\frac{-b}{a-i\varepsilon\beta}w\right\}.$$
Recall that the dynamics over the manifold $C_\varepsilon|_{\alpha=0}$ is given by the following differential equation
$$\dot{w}=i\beta w.$$

Therefore, if $w_0$ is a center of reduced problem \eqref{linear_eq_2_0}, then $w_\varepsilon$ is a center of system \eqref{linear_eq_2} such that $w_\varepsilon\rightarrow w_0$ when $\varepsilon\rightarrow 0$ (see Figure \ref{fig-linear}).

\begin{figure}[h!]
  		\begin{overpic}[scale=0.3]{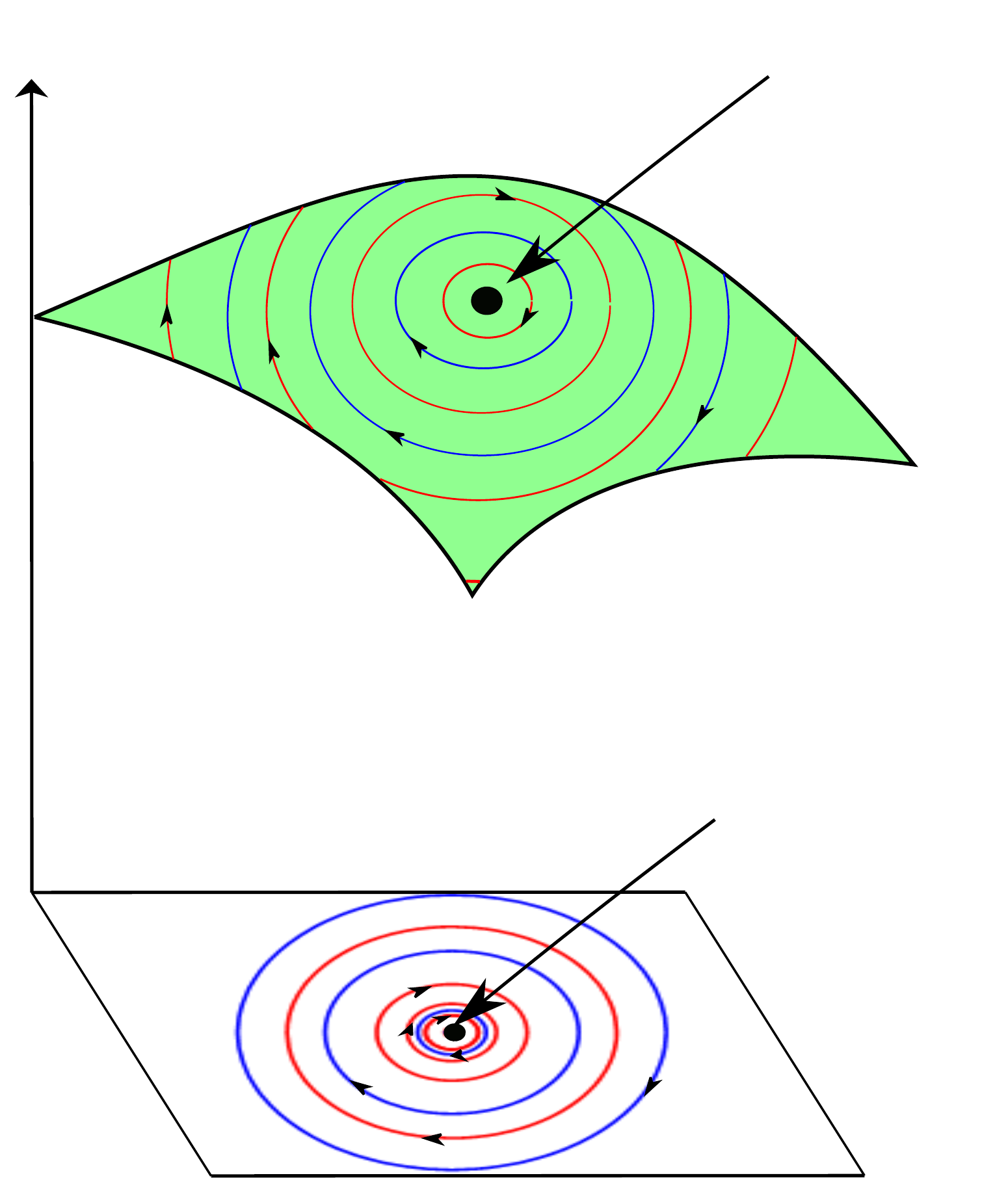}
		\put(76,1){$C_0$}
        \put(81,55){$C_\varepsilon$}
		\put(64,32){$q_0$}
		\put(68,94){$q_\varepsilon$}
        \put(0.5,96){$\varepsilon$}
		\end{overpic}
  \caption{\footnotesize{Persistence of a center of the reduced problem by singular perturbation.}}
  \label{fig-linear}
\end{figure}
The following result is a direct consequence of \cite[Theorem 1.1]{GGJ} and of case \ref{sec:linear_linear}.
\begin{proposition}\label{prop:1}
    Consider system \eqref{main_eq} such that $g$ has a zero $w_0$ of order one and $f$ is a linear holomorphic function. If system \eqref{main_eq} admits an invariant manifold with complex structure, then centers of the reduced problem associated to \eqref{main_eq} are persistent by singular perturbation.
\end{proposition}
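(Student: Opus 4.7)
The plan is to reduce the general case to the linear-linear model analyzed in subsection \ref{sec:linear_linear} via the normal-form result of \cite[Theorem 1.1]{GGJ}, and then to read off the conclusion from the computation already carried out there.

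First I would translate coordinates so that the zero $w_0$ of $g$ sits at the origin. Since $g$ has a simple zero there, item $(b)$ of \cite[Theorem 1.1]{GGJ} provides a local holomorphic change of coordinates $w \mapsto \widetilde{w}$ that conjugates $\dot w = g(w)$ to a linear normal form $\dot{\widetilde w} = \mu \widetilde w$ with $\mu = g'(w_0) \in \mathbb{C}\setminus\{0\}$. Because $f$ was assumed to be a linear holomorphic function in $(z,w)$, the transformed slow equation $\varepsilon \dot z = f(z,w(\widetilde w))$ is again linear in $(z,\widetilde w)$ (a holomorphic change in the $w$-variable alone leaves the $z$-variable untouched, and composing a linear map with a holomorphic change in $\widetilde w$ is holomorphic and linear to first order at the origin, which is what subsection \ref{sec:linear_linear} actually uses after dropping higher-order terms). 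Thus I may assume the system is of the form \eqref{linear_eq_2} with coefficients $a,b,c,d$ determined by $f$ and $g'(w_0)$.

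Next I would identify when the equilibrium $w_0 = 0$ is a center of the reduced problem. In the notation of subsection \ref{sec:linear_linear}, this means $\widetilde g'(0) = (ad-bc)/a = i\beta$ with $\beta \neq 0$, i.e.\ $\alpha = 0$. The normal-hyperbolicity assumption required to run the slow-fast machinery amounts to $\operatorname{Re}(a) \neq 0$, which is just the hypothesis that $f$ defines a genuine slow direction. Assuming that system \eqref{main_eq} admits an invariant manifold with complex structure, the computation in \ref{sec:linear_linear} identifies it uniquely as
\[
C_\varepsilon = \Bigl\{(z,w) : z = \tfrac{-b}{a - i\varepsilon\beta}\, w\Bigr\},
\]
and the induced dynamics on $C_\varepsilon$ is exactly $\dot w = i\beta w$. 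This is a linear center, so $w_\varepsilon = 0$ is a center of \eqref{main_eq} in the transformed coordinates and $w_\varepsilon \to w_0$ as $\varepsilon \to 0$.

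Finally I would transport the conclusion back through the conformal conjugacy. A $0$-conformal conjugacy maps centers to centers (it is a local holomorphic diffeomorphism fixing the equilibrium, hence sends closed orbits around $0$ to closed orbits around the image point), so the preserved center of $\dot{\widetilde w} = i\beta \widetilde w$ on $C_\varepsilon$ corresponds to a preserved center of the original system at a point $w_\varepsilon \to w_0$. The only subtlety I anticipate is checking that the reduction to \ref{sec:linear_linear} really is lossless: the GGJ normal form only conjugates $g$, not the pair $(f,g)$, so I need to verify that the linearity of $f$ combined with the fact that centers are detected by the \emph{linear part} of the reduced flow suffices — and indeed the table in \ref{sec:linear_linear} depends only on $\operatorname{Re}(\widetilde g'(0)) = 0$, which is a conjugacy-invariant condition for simple zeros. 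This is the one place where the argument needs a careful word, but no additional computation.
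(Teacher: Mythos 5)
Your proposal follows essentially the same route as the paper, which disposes of Proposition \ref{prop:1} in one line as ``a direct consequence of \cite[Theorem 1.1]{GGJ} and of case \ref{sec:linear_linear}'': you invoke the $0$-conformal conjugacy of $g$ to its linear normal form at the simple zero, feed the resulting system into the explicit computation of $C_\varepsilon$ and of the induced dynamics $\dot{w}=i\beta w$ from subsection \ref{sec:linear_linear}, and transport the center back through the conjugacy. You are in fact somewhat more careful than the paper in flagging that the conjugacy only normalizes $g$ and not the pair $(f,g)$, and in noting that the center condition depends only on the (conjugacy-invariant) linear part of the reduced flow.
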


\subsubsection{Linear-pole case}\label{sec:linear_pole} Consider the following system
\begin{equation}\label{linear_eq_4}
\begin{aligned}
\left\{\begin{array}{l}
\varepsilon\dot{z}=\alpha z+\beta w,\\[5pt]
\dot{w}=\frac{1}{w^n},
\end{array} \right.
\end{aligned}
\end{equation}
where $\alpha,\beta\in\mathbb{C}\setminus\{0\}$ and $n\geq 1$. 
Recall that the critical manifold $C_0$ associated with system \eqref{linear_eq_4} is normally hyperbolic provided that $\operatorname{Re}{(\alpha)}=\alpha_1\neq 0$. Let $S_0$ be a compact subset of $C_0.$

We now apply the \textit{Fenichel's Theorem} to the equivalent $\mathbb{C}^\infty$ four-dimensional real system associated with the system \eqref{linear_eq_4}, then there exists a smooth locally invariant manifold $S_\varepsilon=\{(z,w):z=h_\varepsilon(w)\}$ of the slow-fast system \eqref{linear_eq_4}, which is diffeomorphic to $S_0.$

Although this does not guarantee that the complex function $h_\varepsilon(z)$ is a holomorphic function, we would like to know the dynamics on the Fenichel manifold if it exists. Hence, suppose that there exists an invariant manifold $C_\varepsilon=\{(z,w):z=h_\varepsilon(w)\},$ where 
\begin{equation}\label{ref_laurent_3}
h_\varepsilon(w)=a_0+\dots+a_{n+2}w^{n+2}+\dots+a_{2n+3}w^{2n+3}+\dots+\mathcal{O}(w^{3n+4}).
\end{equation}
Since $\dot{z}=h_\varepsilon'(w)\dot{w},$ then we get the following equation 
\begin{equation}\label{ref_eq_3}
   \alpha h_\varepsilon(w)+\beta w=h_\varepsilon'(w)\varepsilon w^{-n}. 
\end{equation}
Substituting \eqref{ref_laurent_3} in \eqref{ref_eq_3}, we get that $a_i=0,$ for all $i\neq kn+(k+1)$ and $a_{kn+(k+1)}=\frac{\alpha^{k-1}\beta}{\varepsilon^k\prod_{j=1}^k[jn+(j+1)]},$ with $k\geq 1.$
Therefore, $$C_\varepsilon=\left\{(z,w):z=\sum_{k=1}^\infty a_{kn+(k+1)}w^{kn+(k+1)}\right\}.$$ 
It is easy to see that the series $\sum_{k=1}^\infty a_{kn+(k+1)}w^{kn+(k+1)}$ converges for all $w\in\mathbb{C}$. Recall that the dynamics over the manifold $C_\varepsilon$ is given by the following differential equation
$$\dot{w}=\frac{\beta w+\frac{\alpha\beta}{(n+2)\varepsilon}w^{n+2}+\frac{\alpha^2\beta}{(n+2)(2n+3)\varepsilon^2}w^{2n+3}+\mathcal{O}(1/\varepsilon^3,w^{3n+4})}{\frac{\beta}{\varepsilon}w^{n+1}+\frac{\alpha\beta}{(n+2)\varepsilon^2}w^{2n+2}+\mathcal{O}(1/\varepsilon^3,w^{3n+3})}=:F_\varepsilon(w).$$
Thus, expanding $F_\varepsilon$ around $w_0=0$ we get that
$$F_\varepsilon(w)=\frac{\varepsilon}{w^n}+\mathcal{O}(1/\varepsilon,w^{n+2}).$$
Therefore, $w_0$ is a pole of order $n$ of $F_\varepsilon$ (see Figure \ref{fig-nolinear}).  From \cite[Theorem 1.1]{GGJ}, we conclude that $F_\varepsilon$ and $G\equiv \frac{1}{w^n}$ are 0-conformally conjugated.

\begin{figure}[h!]
    		\begin{overpic}[scale=0.3]{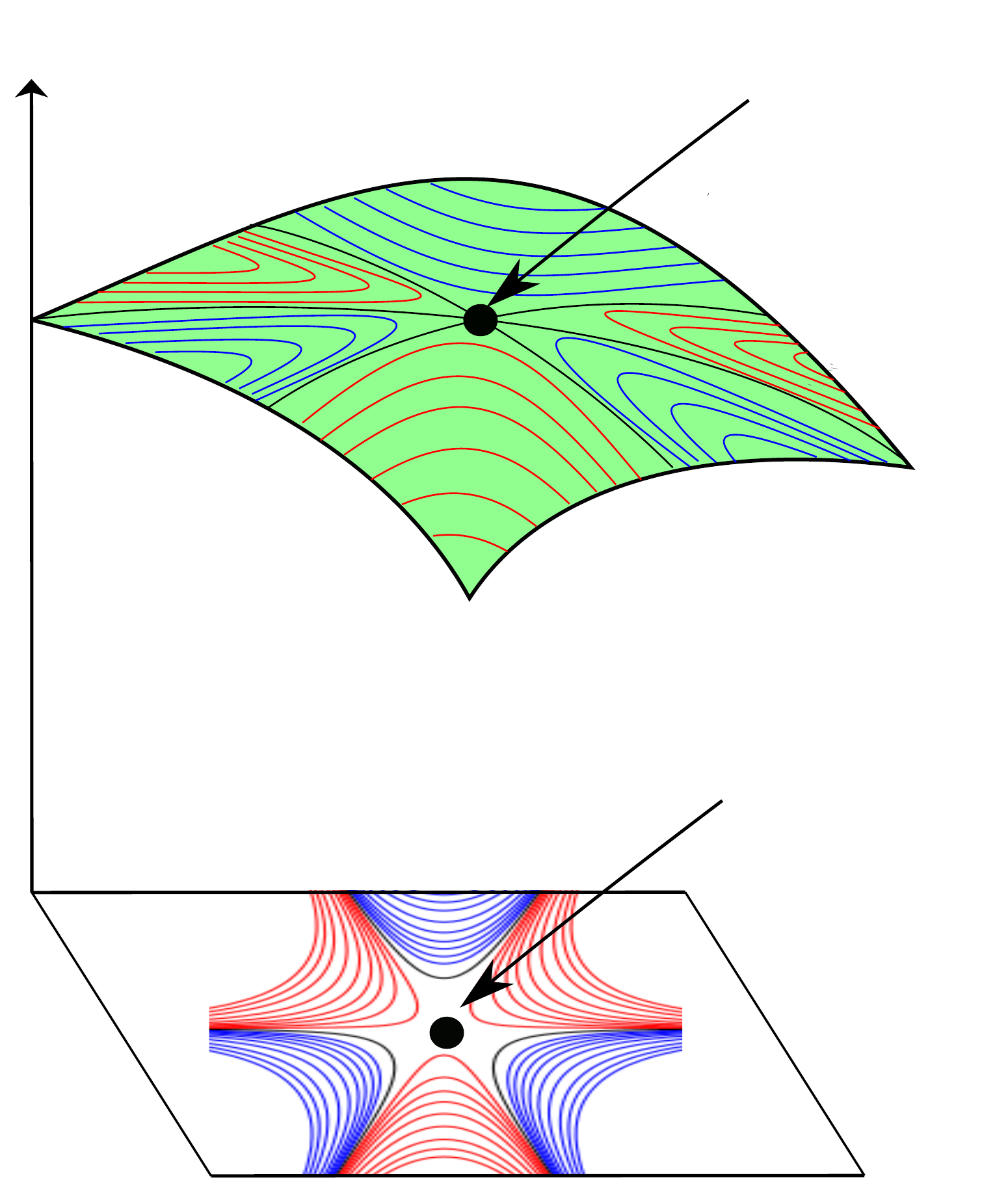}
		\put(76,1){$C_0$}
        \put(81,55){$C_\varepsilon$}
		\put(64,32){$q_0$}
		\put(66,93){$q_\varepsilon$}
        \put(0.5,96){$\varepsilon$}
		\end{overpic}
  \caption{\footnotesize{Persistence of a pole of the reduced problem by singular perturbation.}}
  \label{fig-nolinear}
\end{figure}
The following result is a direct consequence of \cite[Theorem 1.1]{GGJ} and of case \ref{sec:linear_pole}.
\begin{proposition}\label{prop:2}
    Consider system \eqref{main_eq} such that $g$ has a pole $w_0$ of order $n\geq 1$ and $f$ is a linear holomorphic function. If system \eqref{main_eq} admits an invariant manifold with complex structure, then poles of order $n$ of the reduced problem associated to \eqref{main_eq} are persistent by singular perturbation.
\end{proposition}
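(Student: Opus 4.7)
The plan is to reduce the claim to the explicit computation carried out in Section \ref{sec:linear_pole} and then invoke item $(b)$ of \cite[Theorem 1.1]{GGJ}, mirroring the proof of Proposition \ref{prop:1}. First I would translate so that the pole $w_0$ sits at the origin; since $f$ is linear, write $f(z,w)=\alpha z+\beta w+\mu$ and observe that normal hyperbolicity of the critical manifold forces $\operatorname{Re}(\alpha)\neq 0$, so that a linear shift in $z$ eliminates $\mu$. Using \cite[Theorem 1.1]{GGJ} I then 0-conformally conjugate the reduced vector field to the model $\dot{\widetilde{w}}=1/\widetilde{w}^{n}$ via some $\widetilde{w}=\psi(w)$, and apply the $w$-only change $(z,w)\mapsto(z,\psi(w))$ to bring \eqref{main_eq} into the framework of Section \ref{sec:linear_pole}, namely the system \eqref{linear_eq_4}.

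Next I would exploit the hypothesis that an invariant complex manifold $C_\varepsilon=\{z=h_\varepsilon(w)\}$ exists. Holomorphicity of $h_\varepsilon$ combined with the invariance identity produces a unique formal solution, which in Section \ref{sec:linear_pole} was shown to converge on all of $\mathbb{C}$ and to have the form $h_\varepsilon(w)=\sum_{k\geq 1}a_{kn+(k+1)}w^{kn+(k+1)}$. Substituting into $\dot{w}=g(z,w)|_{z=h_\varepsilon(w)}$ yields a meromorphic vector field $F_\varepsilon(w)$ on $C_\varepsilon$ whose Laurent expansion at $w=0$ begins as $\varepsilon/w^n$; hence $w=0$ is a pole of order precisely $n$ for $F_\varepsilon$.

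The final step is to apply item $(b)$ of \cite[Theorem 1.1]{GGJ} once more to conclude that $F_\varepsilon$ and $1/w^n$ are 0-conformally conjugate. Composing with the initial normalization of $g$ yields a 0-conformal conjugacy between the flow restricted to $C_\varepsilon$ and the reduced flow at $w_0$, which is precisely what it means for the pole of order $n$ to persist under singular perturbation.

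The main obstacle I foresee is verifying compatibility of the preliminary normalizations with the linearity of $f$ in $z$. The change $(z,w)\mapsto(z,\psi(w))$ preserves linearity in $z$ but in general replaces $\beta w$ by a holomorphic function of $\widetilde{w}$, so the Laurent computation of Section \ref{sec:linear_pole} does not apply verbatim. To handle this one can either redo the Laurent computation in this mildly more general setting, observing that the leading $\varepsilon/w^n$ coefficient of $F_\varepsilon$ is insensitive to higher-order corrections in $w$, or carry out the Laurent expansion directly on the original $f,g$ and invoke \cite[Theorem 1.1]{GGJ} only at the very last step to pass to the normal form.
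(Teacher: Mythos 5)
Your proposal is correct and follows essentially the same route as the paper, which derives Proposition \ref{prop:2} directly from the explicit Laurent-series computation of the Linear-pole case \ref{sec:linear_pole} together with item $(b)$ of \cite[Theorem 1.1]{GGJ}: normalize $g$ to $1/w^{n}$, determine the unique entire $h_\varepsilon$ from the invariance identity, and observe that the induced dynamics on $C_\varepsilon$ still has a pole of order exactly $n$ at the origin. The compatibility issue you flag --- that the conjugacy $w\mapsto\psi(w)$ replaces $\beta w$ by a general holomorphic function of $\widetilde{w}$, so the computation of Section \ref{sec:linear_pole} does not apply verbatim --- is a genuine point that the paper's one-line deduction glosses over, and either of your proposed remedies (redoing the expansion with the more general affine term, noting that the leading $w^{-n}$ behaviour of the induced field is unaffected, or expanding directly in the original coordinates and invoking \cite[Theorem 1.1]{GGJ} only at the end) closes it.
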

\section{Statements and Declarations}
The authors declare that they have no known competing financial interests or personal relationships that could have appeared
to influence the work reported in this paper.
\section{Acknowledgements}
This article was possible thanks to the scholarship granted from the Brazilian Federal Agency for Support and Evaluation of Graduate Education (CAPES), in the scope of the Program CAPES-Print, process number 88887.310463/2018-00, International Cooperation Project number 88881.310741/2018-01.  Paulo Ricardo da Silva is also partially supported by São Paulo Research Foundation (FAPESP) grant 2019/10269-3 and CNPq grant 302154/2022-1.

Gabriel Rondón is supported by São Paulo Research Foundation (FAPESP) grants 2020/06708-9 and 2022/12123-9. Luiz Fernando Gouveia is supported by São Paulo Research Foundation (FAPESP) grant 2020/04717-0. 









\bibliographystyle{abbrv}
\bibliography{references1}

\end{document}